\newtheorem{thm}{Theorem}[section]
\newtheorem{lem}{Lemma}[section]
\theoremstyle{remark}
\newtheorem{rmk}{Remark}[section]
\numberwithin{equation}{section}
\theoremstyle{definition}
\newcommand{\N}{\ensuremath{\mathbb{N}}}
\newcommand{\R}{\ensuremath{\mathbb{R}}}
\newcommand{\rS}{\ensuremath{\mathbb{S}}}
\newcommand{\mC}{\mathcal{C}}
\newcommand{\mL}{\mathcal{L}}
\newcommand{\na}{\nabla}
\newcommand{\la}{\langle}
\newcommand{\ra}{\rangle}
\newcommand{\lla}{\left \langle}
\newcommand{\rra}{\right \rangle}
\newcommand{\lp}{\left(}
\newcommand{\pa}{\partial}
\newcommand{\rp}{\right)}
\newcommand{\vphi}{\varphi}
\newcommand{\graph}{\textup{graph}}
\newcommand{\lan}{\left \langle}
\newcommand{\ran}{\right \rangle}
\newcommand{\lab}{\left|}
\newcommand{\rab}{\right|}
\newcommand{\Lab}[1]{\left| #1 \right|}
\DeclareMathOperator{\ee}{e}
\numberwithin{equation}{section}
\begin{document}

% title and authors%%%%%%%%%%%%%%%%%%%%%%%%%%%%%%%%%%%%%%%%%%%%%%%%%%%%%%%%%%%
\title[Convexity of 2-convex translating and expanding solitons]{Convexity of 2-convex translating and expanding solitons to the mean curvature flow in $\R^{n+1}$}
\author{Junming Xie and Jiangtao Yu}
%\date{ \today}

% Current address %%%%%%%%%%%%%%%%%%%%%%%%%%%%%%%%%%%%%%%%%%%%%%%%%%%%%%%%%%
\address{Department of Mathematics, Lehigh University, Bethlehem, PA 18015}
\email{jux216@lehigh.edu}

\address{Institute of Geometry and Physics, University of Science and Technology of China, Hefei, Anhui, China 230026}
\email{jtao\_yu@126.com}

% Abstract %%%%%%%%%%%%%%%%%%%%%%%%%%%%%%%%%%%%%%%%%%%%%%%%%%%%%%%%%%%%%%
\begin{abstract}
	In this paper, inspired by the work of Spruck-Xiao \cite{SX:20} and based partly on a result of Derdzi\'nski \cite{De:80}, we prove the convexity of complete 2-convex translating and expanding solitons to the mean curvature flow in $\R^{n+1}$. More precisely, for $n\geq 3$, we show that any $n$-dimensional complete 2-convex translating solitons are convex, and any $n$-dimensional complete 2-convex self-expanders asymptotic to (strictly) mean convex cones are convex.
\end{abstract}
\maketitle

%%%%%%%%%%%%%%%%%%%%%%%%%%%%%%%%%%%%%%%%%%%%%%%%%%%%%%%%%%%%%%%%%%%%
% new section %%%%%%%%%%%%%%%%%%%%%%%%%%%%%%%%%%%%%%%%%%%%%%%%%%%%%%%%%%%%%%%%%%%%
\section{Introduction}

{\it Self-similar solutions} of the mean curvature flow are special solutions for which the shapes of the evolving surfaces are essentially preserved. They play a very important role in the singularity analysis of the mean curvature flow. In particular, self-shrinkers and translating solitons often arise as possible finite time Type I and Type II singularity models, respectively (see, e.g., \cite{Huisken:90}, \cite{Angenent:91,HS:99}). On the other hand, self-expanders are expected to model the long time behavior of the mean curvature flow or the behavior of the flow as it emerges from conical singularities (see, e.g., \cite{EH:89,ACI:95,Stavrou:98,Rasul:10}). In this paper, we focus on translating solitons and self-expanders with codimension one in $\R^{n+1}$ and study their convexity under the 2-convex assumption.

A {\it translating soliton} (or a {\it translator} for short) to the mean curvature flow is an immersed hypersurface $\Sigma^n$ in $\R^{n+1}$ 
satisfying the equation
\begin{equation} \label{eq:translator}
	H=\la \ee_{n+1},\nu \ra.
\end{equation} 
Here and in the following, $H$ is the mean curvature, $\nu$ denotes a global unit normal vector field and $\ee_{n+1}$ is a unit direction. On the other hand, an immersed hypersurface $\Sigma^n$ in $\R^{n+1}$ is called a {\it self-expander} if there is an immersion $x: \Sigma^n \rightarrow \R^{n+1}$ such that
\begin{equation} \label{eq:selfexpander}
	H =\la x, \nu \ra.
\end{equation}

For translating solitons, when $n=1$, the unique convex solution is the grim reaper curve $\Gamma$, which is the graph of $y = \log \sec{x},\ \lab x \rab < \pi/2$. For $n \ge 2$, $\Sigma = \Gamma \times \R^{n-1}$ are convex translating solitons, where $\Gamma$ is the $1$-dimensional grim reaper. There is an entire graphical, rotationally symmetrical and strictly convex translating soliton called the ``bowl soliton'' (see \cite{CSS:07}) in $\R^{n + 1}\ (n\ge2)$. Note that the bowl soliton has the asymptotic expansion as an entire graph $u(x) = \frac{1}{2(n-1)}\lab x\rab^2 - \log\lab x \rab  + O(1)$.

There have been a lot of efforts to classify convex translating solitons. X.-J. Wang \cite{Wang:11} proved that, for all dimensions $n\geq 1$, a complete convex translating soliton is either an entire solution or is defined in a strip region. He also showed that the only entire convex graphical translating soliton in $\R^3$ is the bowl soliton. As for solutions over slab regions, Bourni-Langford-Tinaglia \cite{BLT:20b} proved that there exists a strictly convex translating soliton in a slab region with width greater than $\pi$ in $\R^{n + 1}\ (n\geq 2)$. They solved the Dirichlet problem to obtain such a translating soliton. Meanwhile, Hoffman-Ilmanen-Mart\'in-White \cite{HIMW:19} further proved the existence for all slabs of width greater than $\pi$ for the case of dimension $n=2$. Moreover, the same authors \cite{HIMW:19}  are able to prove the uniqueness in this case.

Regarding the pinching estimates for translating solitons, when $n=2$, Spruck and Xiao \cite{SX:20} recently proved that any immersed two-sided mean convex translating soliton in $\R^3$ is convex. Here and in the following, {\it mean convex} means the mean curvature $H>0$. As an application of Spruck-Xiao's convexity theorem, Hoffman-Ilmanen-Mart\'in-White \cite{HIMW:19} gave a full classification of the translating solitons that are complete graphs over domains in $\R^2$. Later on, Bourni-Langford-Tinaglia \cite{BLT:20b} extended Spruck-Xiao's result by showing that a mean convex translating soliton in $\R^{n+1}$ with at most two distinct principal curvatures at each point and bounded second fundamental form is convex.

More recently, Spruck and Sun \cite{SS:20} proved the convexity of  any immersed two-sided translating soliton $\Sigma\subset \R^{n+1}\ (n\ge3)$ under the assumption that $\Sigma$ is uniformly 2-convex. Here, the {\it uniformly 2-convex} condition means that $H>0$ and $\kappa_1 + \kappa_2\ge \beta H$ for some constant $\beta>0$, where $\kappa_1\leq \kappa_2$  are the least two principal curvatures. Although $\kappa_1$ is smooth wherever $\kappa_1<0$, the authors were concerned that the other principal curvatures might not be differentiable if some multiplicity changes. In \cite{SS:20}, they successfully dealt with this issue by using a special
$\delta$-approximation of $\min\{\kappa_1,\ldots, \kappa_n\}$.
They also needed the uniformly 2-convex condition to show that the $\delta$-approximation $\mu^{n} = \mu^n(\kappa_1,\cdots,\kappa_n)$ is smooth and converges to $\min\{\kappa_1,\ldots, \kappa_n\}$. As pointed out in \cite{SS:20}, their argument in applying the Omori-Yau maximum principle is delicate and utilizes the special properties of their approximation $\mu^n$.

It turns out, by applying a result of Derdzi\'nski \cite{De:80}, we can actually avoid the elaborate $\delta$-approximation arguments in~\cite{SS:20} and work with the eigenfunctions $\kappa_1,\kappa_2,\cdots,\kappa_n$ directly. Our first main result is the following.

\begin{thm}{\label{thm:translator}}
Let $\Sigma^n \subset \R^{n+1}$, $n\geq 3$, be a complete immersed two-sided 2-convex translating soliton for the mean curvature flow. Then $\Sigma$ is convex.
\end{thm}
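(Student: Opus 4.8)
The plan is to follow the architecture of Spruck--Sun's argument for uniformly 2-convex translators, but to run it with the genuine principal curvature functions $\kappa_1\le\kappa_2\le\cdots\le\kappa_n$ in place of the smoothed minimum of \cite{SS:20}, using Derdzi\'nski's \cite{De:80} local structure theory for the Codazzi tensor $A=(h_{ij})$ to provide exactly the regularity and rigidity that their $\delta$-approximation was designed to supply. First I would dispose of the trivial case: if $A\equiv 0$ then $\Sigma$ is a hyperplane and there is nothing to prove, so assume $A\not\equiv 0$. Writing $\mathcal{L}=\Delta+\la\ee_{n+1}^\top,\na\,\cdot\,\ra$, the mean curvature of a translator satisfies $\mathcal{L}H+|A|^2H=0$; the 2-convexity inequality $\kappa_1+\kappa_2\ge 0$ first forces $H\ge 0$ (if $\kappa_1<0$ then $H\ge -(n-2)\kappa_1>0$), and then the strong maximum principle upgrades this to $H>0$ everywhere (an interior zero would force $H\equiv0$, hence a minimal 2-convex cylinder over $\ee_{n+1}^\perp$, which is a hyperplane). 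A pointwise estimate from $\kappa_1+\kappa_2\ge 0$ together with $0<H=\la\ee_{n+1},\nu\ra\le 1$ shows $|\kappa_i|\le 1$ for all $i$, so $|A|$ is uniformly bounded and, by the Gauss equation $\Ric=H\,\mathcal{W}-\mathcal{W}^2$ (with $\mathcal{W}$ the shape operator), so is $\Ric$ from below; this makes the Omori--Yau machinery available. Finally, on the open set $\Omega:=\{\kappa_1<0\}$ the inequality $\kappa_2\ge-\kappa_1>\kappa_1$ forces $\kappa_1$ to be a simple eigenvalue, hence a smooth function with a smooth unit eigenvector field $e_1$, and $\kappa_1/H\in[-\tfrac1{n-2},0)$ there. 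I would argue by induction on $n\ge 3$.

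The analytic heart is a differential inequality for $f:=\kappa_1/H$ on $\Omega$. Using the Simons-type identity for translators $\mathcal{L}\mathcal{W}=-|A|^2\mathcal{W}$ together with the second-order perturbation formula for the simple eigenvalue $\kappa_1$, one gets on $\Omega$
\[
  \mathcal{L}\kappa_1=-|A|^2\kappa_1-2\sum_{p\ge 2}\sum_k\frac{(\na_k h_{1p})^2}{\kappa_p-\kappa_1},
\]
in which the double sum is nonnegative because $\kappa_p>\kappa_1$ on $\Omega$ (only the $\na h_{1p}$ with $p\ge 2$ enter, and the drift term of $\mathcal{L}$ contributes no perturbation correction). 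Combining this with $\mathcal{L}H=-|A|^2H$ via the quotient rule, the $|A|^2$ terms cancel and one is left with
\[
  \widetilde{\mathcal{L}}f:=\mathcal{L}f+\frac{2}{H}\la\na H,\na f\ra=-\frac{2}{H}\sum_{p\ge 2}\sum_k\frac{(\na_k h_{1p})^2}{\kappa_p-\kappa_1}\le 0 ,
\]
so $f$ is a supersolution on $\Omega$ of the uniformly elliptic drift operator $\widetilde{\mathcal{L}}$, with strict inequality exactly where some $\na h_{1p}$ ($p\ge 2$) is nonzero.

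Now suppose for contradiction that $\kappa_1<0$ somewhere and set $-c_0=\inf_\Sigma f\in[-\tfrac1{n-2},0)$. Applying the Omori--Yau maximum principle to $f$ (after a routine rescaling, if needed, to accommodate $H\to 0$ along the minimizing sequence) produces points $p_k$ with $f(p_k)\to-c_0$, $|\na f|(p_k)\to 0$ and $\widetilde{\mathcal{L}}f(p_k)\ge-1/k$; the displayed identity then forces $\sum_{p\ge 2,k}(\na_k h_{1p})^2/(\kappa_p-\kappa_1)\to 0$ along $p_k$. Translating by $-p_k$ and passing to a $C^\infty_{\mathrm{loc}}$ limit (using the uniform curvature bounds) yields a complete translator $\Sigma_\infty\ni 0$ on which $\kappa_1/H$ attains its negative minimum $-c_0$ at the origin. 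Since $-c_0<0$, the origin lies in $\{\kappa_1<0\}\subset\Sigma_\infty$, and the strong maximum principle for $\widetilde{\mathcal{L}}$ forces $\kappa_1/H\equiv-c_0$ on the component of $\{\kappa_1<0\}$ through $0$; as $\kappa_1=0$ on the boundary of that component, it must be all of $\Sigma_\infty$. Equality in the inequality then gives $\na_k h_{1p}\equiv 0$ for all $p\ge 2$, which by the Codazzi equation means the $\kappa_1$-eigendistribution $E_1$ is parallel. Here Derdzi\'nski's structure theorem for Codazzi tensors splits $\Sigma_\infty$ isometrically (and extrinsically) as $\R\times\Sigma_\infty'$ with $E_1$ tangent to the line factor, $\Sigma_\infty'$ a complete $(n-1)$-dimensional hypersurface whose principal curvatures are $\kappa_2\le\cdots\le\kappa_n$.

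The remaining step --- and, I expect, the main obstacle --- is to extract a contradiction from this rigid model. Because $H>0$, the fixed direction $\ee_{n+1}$ is not tangent to the $\R$-factor, $\ee_{n+1}^\top$ is parallel to $e_1$, and the $E_1$-geodesics are planar curves of curvature $\kappa_1=-c_0H<0$; unwinding the translator equation on the product one sees that $\Sigma_\infty'$, suitably rescaled, is itself a complete $(n-1)$-dimensional translator which is strictly 2-convex (indeed $\kappa_2>0$ since $\kappa_1+\kappa_2\ge0$ and $\kappa_1<0$). One must then show that this twisted-product configuration cannot close up into a complete translator in $\R^{n+1}$ while carrying a strictly negative smallest principal curvature --- either by a direct analysis of how the curvature in the $E_1$-direction couples to the geometry of $\Sigma_\infty'$, or by iterating the splitting and reducing the dimension until one contradicts the induction hypothesis, whose base case $n-1=2$ is Spruck--Xiao's theorem in $\R^3$ \cite{SX:20} (where ``2-convex'' coincides with ``mean convex'') and whose case $n-1=1$ is the grim reaper. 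This forces $\kappa_1\ge 0$ on $\Sigma$, i.e. $\Sigma$ is convex. The points I would watch most carefully are the precise form of the translator Simons identity and of the eigenvalue perturbation term (to be sure the $|A|^2$ cancellation is exact), the validity of Omori--Yau when $H$ is not bounded below, and above all the rigidity analysis of the split limit --- which is precisely where the input of Derdzi\'nski \cite{De:80} replaces the $\delta$-approximation of \cite{SS:20}.
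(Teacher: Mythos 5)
Your setup coincides with the paper's: the same quotient quantity $\kappa_1/H$, the same drift operator with the extra $\tfrac{2}{H}\langle\nabla H,\nabla\cdot\rangle$ term, the same eigenvalue formula $\Delta\kappa_1=(\Delta A)(\tau_1,\tau_1)-2\sum_{l\neq1}|\nabla h_{l1}|^2/(\kappa_l-\kappa_1)$ on the open dense set where the eigenvector fields are smooth, and the same Omori--Yau-plus-translation-limit scheme. The genuine gap is exactly where you flag ``the main obstacle'': the endgame after the strong maximum principle gives $\kappa_1/H\equiv\epsilon_0<0$ on the limit. Your proposed route --- an extrinsic splitting $\Sigma_\infty=\R\times\Sigma_\infty'$ with $E_1$ tangent to the line factor, followed by induction on dimension --- is both unfinished and flawed as stated: vanishing of $\nabla h_{l1}$ does make $\tau_1$ intrinsically parallel (via $(\nabla_jA)(\tau_l,\tau_1)=-(\kappa_l-\kappa_1)\langle\tau_l,\nabla_j\tau_1\rangle$), but the $E_1$-leaves have nonzero curvature $\kappa_1=\epsilon_0H$ in $\R^{n+1}$, so there is no splitting off of a straight line, and the claim that the cross-section is ``itself a translator, suitably rescaled'' has no justification (the translator equation is not scale invariant). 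None of this is needed. Once the gradient terms vanish, Codazzi gives $\nabla_l\kappa_1=\nabla_lh_{11}=0$ and $\nabla_1h_{ll}=0$ for $l\neq1$; combined with $\nabla(\kappa_1/H)=0$ this yields $\nabla_1\kappa_1=\frac{\epsilon_0}{1-\epsilon_0}\nabla_1(h_{22}+\cdots+h_{nn})=0$, hence $\nabla\kappa_1=0$ and $\nabla H=0$, so $H$ is constant; then $\Delta^fH+|A|^2H=0$ forces $H\equiv0$, contradicting 2-convexity. That elementary rigidity step is the paper's ``interior infimum'' case and it closes the proof immediately, with Derdzi\'nski's result used only to furnish the adapted frame on $\Sigma_A^-$, not any de Rham splitting.

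There is a second, smaller gap in your limit argument: before asserting that $\kappa_1/H$ attains its negative minimum at the origin of $\Sigma_\infty$, you must rule out $H(q_k)\to0$ (otherwise $\kappa_1$ and $H$ both vanish in the limit and the quotient is not controlled there); your parenthetical ``routine rescaling'' cannot repair this, again because translators do not rescale. The paper handles it with the soliton identity $\nabla_iH=-h_{ij}\langle\tau_j,\ee_{n+1}\rangle$: along the Omori--Yau points one shows $\nabla H/H\to0$ while each $\kappa_l/H$ stays bounded away from $0$ (limit $\epsilon_0<0$ for $l=1$, at least $-\epsilon_0>0$ for $l\ge2$), so every tangential component of $\ee_{n+1}$ tends to zero, the unit normal converges to $\ee_{n+1}$, and $H(q_k)\to1$; hence the limit is a genuine 2-convex translator with $H(0)=1$ to which the interior-minimum rigidity applies. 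With these two repairs --- the elementary constancy-of-$H$ contradiction in place of the splitting/induction, and the normal-convergence argument guaranteeing $H\to1$ --- your outline becomes the paper's proof.
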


\begin{rmk}
Most computations in the proof of Theorem \ref{thm:translator} was carried out in the Fall of 2018 after we learned the result of Spruck-Xiao \cite{SX:20} through the lecture of B. White at the Lehigh Geometry and Topology Conference in May 2018. In November 2020, the work of Spruck-Sun \cite{SS:20} was brought to our attention. We immediately noticed their concern about the differentiability of the principal curvatures and realized that we had overlooked this issue. However, as the second fundamental form is a Codazzi tensor, we can get around this potential issue by appealing to a result of Derzi\'nski \cite{De:80} on the set of $\{\kappa_1<0\}$ (see Lemma \ref{lma:derzinski}).
\label{rmk:mainThm}
\end{rmk}

For self-expanders, Smoczyk \cite{Smoczyk:21} proved that any properly immersed mean convex self-expanding surface $\Sigma^2 \subset \R^3$ that is smoothly asymptotic to a cone must be strictly convex. Partly motivated by this work, we also study the pinching estimate for higher dimensional 2-convex asymptotically conical self-expanders. Specifically, by using similar arguments as in the proof of Theorem \ref{thm:translator}, we prove the following result.

\begin{thm} \label{thm:selfexpander}
	Let $\Sigma^n \subset \R^{n+1}$, $n\geq 3$, be a complete immersed two-sided 2-convex self-expander asymptotic to mean convex cones. Then $\Sigma^n$ is convex.
\end{thm}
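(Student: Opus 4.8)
\noindent I would follow the scheme of the proof of Theorem~\ref{thm:translator}, replacing the translator structure equations by their self-expander counterparts; it is enough to treat a single connected component. Order the principal curvatures $\kappa_1\le\kappa_2\le\cdots\le\kappa_n$; $2$-convexity reads $H>0$ and $\kappa_1+\kappa_2\ge 0$, and we must show $\kappa_1\ge 0$. Suppose not, so that $U:=\{\kappa_1<0\}$ is nonempty and open. On $U$ one has $\kappa_2\ge-\kappa_1>0$ and, since $\kappa_3,\dots,\kappa_n\ge\kappa_2$, also $H=\sum_i\kappa_i\ge(n-2)\kappa_2>0$ — this is where $n\ge3$ is used — so $q:=\kappa_1/H$ is well defined and strictly negative on $U$. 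Moreover, on $U$ the value $\kappa_1$ is a \emph{simple} eigenvalue of the shape operator; since the second fundamental form is a Codazzi tensor, Lemma~\ref{lma:derzinski} guarantees that $\kappa_1$ and its eigenline field are smooth on $U$, and that the Codazzi equations pin down the covariant derivatives of the second fundamental form in an adapted frame (in particular $\na_k\kappa_1=h_{11k}$). This removes the differentiability concern of \cite{SS:20} and lets us work with $\kappa_1$, hence with $q$, directly on $U$.

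\noindent Next I would derive a differential inequality for $q$ on $U$. Let $\mathcal{L}:=\Delta+x^{\top}\cdot\na$ be the drift Laplacian attached to the self-expander equation \eqref{eq:selfexpander}, where $x^{\top}$ is the tangential part of the position vector. The relevant Simons-type identity is $\mathcal{L}h_{ij}=-(1+|A|^{2})h_{ij}$, whence $\mathcal{L}H=-(1+|A|^{2})H$. Contracting the identity in the direction $e_1$ of the lowest eigenvector, and using the formula for the drift-Laplacian of a simple eigenvalue,
\[
\mathcal{L}\kappa_1=(\mathcal{L}h)(e_1,e_1)+2\sum_{k>1}\frac{1}{\kappa_1-\kappa_k}\sum_{a}h_{1ak}^{\,2},
\]
in which the correction sum is $\le 0$ because $\kappa_1$ is the smallest eigenvalue, and then applying the quotient rule to $q=\kappa_1/H$, the zeroth-order terms cancel and the $|\na H|^{2}$ terms combine away, leaving
\[
\mathcal{L}q+\frac{2}{H}\la\na H,\na q\ra=\frac{2}{H}\sum_{k>1}\frac{1}{\kappa_1-\kappa_k}\sum_{a}h_{1ak}^{\,2}\le 0\qquad\text{on }U .
\]
Thus $q$ is a supersolution, on $U$, of a linear drift-elliptic operator with no zeroth-order term, and it satisfies $q<0$ in $U$ and $q=0$ on $\pa U$.

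\noindent I would then run a maximum-principle argument. By the strong minimum principle, $q$ cannot attain a negative interior minimum unless it is locally constant; but in the constant case the equality forces $h_{1ak}\equiv 0$ for all $k>1$, hence $\na\kappa_1\parallel e_1$, and together with $\kappa_1=(\inf_\Sigma q)H$ and the self-expander equation this yields a local splitting incompatible with $\Sigma$ being a $2$-convex expander asymptotic to a strictly mean convex cone. To exclude the possibility that $\inf_\Sigma q<0$ is not attained, I would invoke the Omori--Yau maximum principle, which applies because the asymptotically conical hypothesis controls the end of $\Sigma$ — a lower Ricci bound, the linear growth of the drift $x^{\top}$, and $|\na H|/H\to 0$, the last using that the cone is strictly mean convex so that $H\sim H_\Gamma/r$ stays nondegenerate at infinity — so that along an almost-minimizing sequence both the gradient term and the right-hand side above tend to $0$, again forcing a rigid limiting configuration ruled out by the asymptotics. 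Hence $\inf_\Sigma q\ge 0$, i.e.\ $\kappa_1\ge 0$, so $\Sigma$ is convex.

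\noindent The main obstacle, exactly as in the translator case of Spruck--Sun, is the noncompactness of $\Sigma$: converting the supersolution property of $q=\kappa_1/H$ into the pointwise bound $\kappa_1\ge 0$ requires a careful use of the asymptotically conical hypothesis, both to legitimize Omori--Yau in the presence of the unbounded drift $x^{\top}$ and to identify and eliminate the rigid configurations produced in the equality case. By contrast, verifying the Simons-type identity with the sign and normalization dictated by $H=\la x,\nu\ra$, and checking the favorable sign of the eigenvalue-correction terms, is routine — the direct analogue of what is already done in the proof of Theorem~\ref{thm:translator}.
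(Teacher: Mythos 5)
Your derivation of the differential inequality for $q=\kappa_1/H$ (the drift-elliptic supersolution property on $U$, via the expander Simons identity of Lemma \ref{pre:expander} and the eigenvalue formula of Lemma \ref{lem:laplacian_k1}, with Derdzi\'nski's result handling the smooth adapted frame) and your treatment of an attained interior minimum match the paper's Lemma \ref{lem:expanderlaplacianPhi(k/H)} and its Case 1: there, constancy of $\kappa_1/H$ together with the vanishing of $\nabla h_{l1}$ for $l\neq 1$ and the Codazzi equation forces $\nabla H=0$, and then $\Delta^{\Sigma}H+\langle\nabla H,x\rangle=-H(1+|A|^2)$ gives $H\equiv 0$, a contradiction — you should make your ``local splitting incompatible with the asymptotics'' step this precise, but that is a fixable vagueness.

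The genuine gap is in your treatment of an infimum approached at infinity. You import the translator strategy (Omori--Yau plus a limiting rigidity argument), but neither ingredient survives the passage to expanders as you have set it up. First, the drift term is now $\langle\nabla\varphi,x\rangle$ with $|x^{\top}|$ growing linearly: the Omori--Yau conclusion $|\nabla\varphi|(p_k)\to 0$ gives no control on $|x(p_k)|\,|\nabla\varphi|(p_k)$, so you cannot conclude that the drift term vanishes along the sequence and hence cannot isolate the sign of the remaining term $\sum_{l\neq 1}|\nabla h_{l1}|^2/\bigl(H(\kappa_l-\kappa_1)\bigr)$; you flag ``legitimizing Omori--Yau in the presence of the unbounded drift'' as an obstacle but never resolve it. Second, the translator proof closes Case 2 by translating $q_k$ to the origin and using a compactness theorem for translators to produce a limit translator attaining the minimum; the expander equation $H=\langle x,\nu\rangle$ is not translation invariant, so there is no analogous limit object, and your ``rigid limiting configuration ruled out by the asymptotics'' has no machinery behind it. The paper avoids all of this: since $\Sigma$ is $2$-convex and every cone has a zero principal curvature, the asymptotic cone is convex with least principal curvature $0$; Ding's graphical representation (Lemmas \ref{lem:graph}, \ref{lem:eqofsecondff}) then yields the decay $\kappa_1=O(r^{-3})$ while mean convexity of the cone forces exactly linear decay $H\approx r^{-1}$ (Lemma \ref{lem:decayofprin_cur}), so $\kappa_1/H\to 0$ at infinity and a negative infimum cannot be approached along $p_k\to\infty$ — no Omori--Yau is needed at all. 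You already invoke $H\sim H_{\Gamma}/r$ to control $|\nabla H|/H$; the missing observation is the companion decay of $\kappa_1$, which settles Case 2 directly and is where the hypothesis that the asymptotic cone is mean convex is really used.
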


\begin{rmk}
	By essentially the same argument, we can also prove a similar result for the self-shrinking case. Namely, we are able to prove that any complete immersed two-sided 2-convex self-shrinker asymptotic to mean convex cones must be convex. See Theorem \ref{thm:selfshrinker}.
\end{rmk}

\begin{rmk}
	Actually, by the assumption of 2-convexity, the asymptotic cone must be a (weakly) convex cone (see the proof of Theorem \ref{thm:selfexpander} in section 4). However, we need the assumption of the asymptotic cone being mean convex to ensure that the asymptotic cone has at least one positive principal curvature.
\end{rmk}

We also note that it is natural to assume being asymptotically conical for self-expanders. On one hand, a lot of known examples of self-expanders are asymptotically conical (see, e.g., \cite{EH:89}, \cite{Ilmanen:95,Ding:20}, \cite{ACI:95,Helmensdorfer:12}). In particular, the mean curvature flow associated to the asymptotically conical self-expander flows out of the asymptotic cone. On the other hand, in recent years, some advances in self-expanders were obtained under the assumption of being asymptotically conical. For example, regarding the uniqueness of self-expanders, Fong-McGrath \cite{FongM:19} proved that mean convex self-expanders which are asymptotic to $O(n)$-invariant cones are rotationally symmetric. Ding \cite{Ding:20} has studied the relationship between minimal cones and self-expanders. More recently, Bernstein and Wang \cite{BW:19,BW:21} studied the space of asymptotically conical self-expanders of mean curvature flow, in particular compactness in the locally smooth topology for certain natural families of asymptotically conical self-expanding solutions of mean curvature flow.

% Acknowledgment %%%%%%%%%%%%%%%%%%%%%%%%%%%%%%%%%%%%%%%%%%%%%%%%%%%%%%%%%%
\medskip
\textbf{Acknowledgements.} Both authors are very grateful to their Ph.D. advisor Professor Huai-Dong Cao for suggesting this project in Summer 2018 and for his invaluable suggestions, constant support and encouragement. The first author would also like to thank Professor Lu Wang for helpful discussions, and Professor Knut Smoczyk for answering questions related to his paper \cite{Smoczyk:21}.

\bigskip
% new section %%%%%%%%%%%%%%%%%%%%%%%%%%%%%%%%%%%%%%%%%%%%%%%%%%%%%%%%%%%%
\section{Preliminaries}

In this section, we fix the notations for the rest of the paper and collect several results that will be used later in the proof of the main theorems.

\subsection{Codazzi tensors}

A symmetric $(0,2)$ tensor $T$ on a Riemannian manifold is said to be a {\em Codazzi tensor} if it satisfies the {\em Codazzi equation}
$$ (\na_XT)(Y,Z) = (\na_YT)(X,Z) $$
for any tangent vector fields $X, Y$, and $Z$. Now, we recall a very useful result of Derdzi\'nski \cite{De:80} (valid for any Codazzi tensor) that we shall need in the proof of our main theorems. Define the integer-valued function $E_A$ for the second fundamental form $A$ on $\Sigma$ by
\[E_A(p) = \text{the number of distinct eigenvalues of }A(p).\]
Let $\Sigma_A = \{p\in\Sigma: E_A \text{ is constant in a neighborhood of }p\}$, then $\Sigma_A$ is open and dense in $\Sigma$ (see~\cite{De:80}).
We now set \[\Sigma^-=\{p\in\Sigma: \text{the least eigenvalue of } A \text{ is negative at $p$}\}\]
and  define \[\Sigma_A^- := \Sigma^- \cap \Sigma_A.\]
It is clear that $\Sigma_A^-$ is an open dense subset of $\Sigma^-$. Notice also the 2-convexity condition of $\kappa_1+\kappa_2>0$ ensures that $\kappa_1$ is smooth in the open set $\Sigma^-$. %$\Sigma^-$ is open since $\kappa_1$ is cont.

\begin{lem} \footnote{In early April 2021, we became aware of the work of Singley \cite{Singley:75} and found that his work suits our needs more directly.} \textup{{\bf (Derdzi\'nski \cite{De:80})}}
	Given any Codazzi tensor $A$ on a Riemannian manifold $\Sigma^n$, we have, on each connected component $\Omega$ of $\Sigma_A^-$,
	\begin{enumerate}
		\item The eigendistributions of $A$ are integrable and their leaves are
		totally umbilic submanifolds of $\Sigma^-$,
		\item Eigenspaces of $A$ form mutually orthogonal differentiable distributions.
	\end{enumerate}
	\label{lma:derzinski}
\end{lem}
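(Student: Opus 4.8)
The plan is to reprove the statement along the lines of Singley \cite{Singley:75} and Derdzi\'nski \cite{De:80}: everything is local on $\Omega$, and the only ingredients are the $g$-symmetry of $A$, the Codazzi equation, and the fact that on $\Omega$ the number of distinct eigenvalues of $A$ equals a fixed constant $k$.

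\emph{Local spectral decomposition.} Near any $p_0\in\Omega$ the $k$ distinct eigenvalues $\lambda_1(p_0)<\cdots<\lambda_k(p_0)$ can be enclosed by pairwise disjoint small circles $\gamma_1,\dots,\gamma_k\subset\C$; for $p$ in a neighborhood of $p_0$ the circle $\gamma_i$ still encircles exactly the $i$-th eigenvalue, so the spectral projection $P_i(p)=\tfrac1{2\pi i}\oint_{\gamma_i}(zI-A(p))^{-1}\,dz$ depends smoothly on $p$, and $\lambda_i=\tr(AP_i)/\operatorname{rk}P_i$ is a smooth function. Hence the eigenspaces $V_i=\operatorname{im}P_i$ are smooth subbundles with $T\Sigma|_\Omega=V_1\oplus\cdots\oplus V_k$, and since $A$ is $g$-symmetric they are mutually $g$-orthogonal; this is assertion (2).

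\emph{The Codazzi identity.} Fix an eigenvalue $\lambda$ with distribution $V=V_\lambda$, pick local sections $Y,Z$ of $V$ and $X$ of $V_\mu$ for some $\mu\neq\lambda$, and write $(\cdot)_\mu$ for the $V_\mu$-component. Expanding $(\na_XA)(Y,Z)=X(A(Y,Z))-A(\na_XY,Z)-A(Y,\na_XZ)$ with $A(Y,Z)=\lambda\la Y,Z\ra$ and using that $Y,Z$ are $\lambda$-eigenvectors together with $\na g=0$ makes the left-hand side collapse to $(X\lambda)\la Y,Z\ra$; similarly, using $\la X,Z\ra=0$ and hence $Y\la X,Z\ra=0$, the right-hand side $(\na_YA)(X,Z)$ becomes $(\lambda-\mu)\la X,(\na_YZ)_\mu\ra$. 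The Codazzi equation therefore yields
\[
(X\lambda)\,\la Y,Z\ra=(\lambda-\mu)\,\la X,(\na_YZ)_\mu\ra .
\]
Interchanging $Y$ and $Z$ and subtracting (the left side is symmetric in $Y,Z$) gives $(\na_YZ)_\mu=(\na_ZY)_\mu$ for every $\mu\neq\lambda$, so $[Y,Z]=\na_YZ-\na_ZY\in V$, i.e. each $V_i$ is integrable. Reading the displayed identity the other way, for a leaf $L$ of $V$ the second fundamental form obeys $\la\mathrm{II}_L(Y,Z),X\ra=\frac{X\lambda}{\lambda-\mu}\la Y,Z\ra$ for $X\in V_\mu$, that is $\mathrm{II}_L(Y,Z)=\la Y,Z\ra\,\eta$, where $\eta$ is the normal field whose $V_\mu$-component is $\frac1{\lambda-\mu}\,\pi_\mu(\operatorname{grad}\lambda)$ (with $\pi_\mu$ the orthogonal projection onto $V_\mu$); this is exactly total umbilicity of $L$ in $\Sigma^-$. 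Together these two facts prove (1).

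\emph{Main obstacle.} The two places that need care are the smooth dependence of the spectral data on the constant-multiplicity locus, which is taken care of by the contour-integral formula above, and the algebraic bookkeeping in expanding the Codazzi equation, where one must track exactly which components of $\na_XY,\na_XZ,\na_YX,\na_YZ$ survive after pairing with eigenvectors belonging to distinct eigenvalues. The latter step is routine but is where a sign slip or a misplaced component would be fatal; everything else is linear algebra and the definition of a totally umbilic submanifold. (For the application in the later sections only the case $\lambda=\kappa_1<0$, where $V$ is a line field, is actually used, but the argument above requires no such restriction.)
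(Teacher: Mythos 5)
Your argument is correct. Note, though, that the paper itself gives no proof of this lemma: it is quoted verbatim from Derdzi\'nski \cite{De:80} (with the footnote pointing to Singley \cite{Singley:75}), so there is no internal proof to compare against; what you have done is supply the standard argument from those references. Your two ingredients are exactly the classical ones: (i) on the locus where the number of distinct eigenvalues is locally constant, the Riesz contour-integral projections $P_i$ depend smoothly on the point, giving smooth, mutually $g$-orthogonal eigenbundles and smooth eigenvalue functions; (ii) the Codazzi equation, expanded against eigenvectors of distinct eigenvalues, yields $(X\lambda)\langle Y,Z\rangle=(\lambda-\mu)\langle X,(\nabla_YZ)_\mu\rangle$, whose antisymmetrization gives involutivity of each $V_\lambda$ and whose direct reading gives $\mathrm{II}_L(Y,Z)=\langle Y,Z\rangle\,\eta$ with $\eta=\sum_{\mu\neq\lambda}\frac{1}{\lambda-\mu}\pi_\mu(\operatorname{grad}\lambda)$, i.e.\ total umbilicity of the leaves. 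I checked the bookkeeping in both computations and it is right (the left side collapses by metric compatibility, the right side by differentiating $\langle X,Z\rangle=0$). The only point worth stating explicitly is the one you pass over with ``$\gamma_i$ still encircles exactly the $i$-th eigenvalue'': since the disjoint disks around the eigenvalues at $p_0$ each capture at least one eigenvalue at nearby points, constancy of the number of distinct eigenvalues on $\Omega$ forces each disk to contain exactly one, so each $P_i$ really is the projection onto a single eigenspace and the multiplicities are locally constant; this is where membership in $\Sigma_A$ is used, and it is a gloss rather than a gap. Your parenthetical is also accurate: the paper later uses the full adapted frame on $\Sigma_A^-$, not only the $\kappa_1$-line field, so it is good that your proof imposes no restriction on $\lambda$.
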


Let $E_1, \cdots, E_m$ be the local eigendistributions of $A$ around any point $p\in \Omega$. Let the dimension of the $E_l$ be $d_l$, $l=1, \cdots, m$. Then there exist local orthonormal eigenvector fields $\{\tau_i\}_{i=1}^n$ around $p$ such that 
\[ \{\tau_1,\cdots, \tau_{d_1}\} \in E_1, \ \{\tau_{d_1 + 1},\cdots, \tau_{d_1+d_2}\}\in E_2, \cdots, 
 \{\tau_{d_1+\cdots+d_{m-1}+1},\cdots,\tau_n\} \in E_m.\] 
Define \[\kappa_i = A(\tau_i,\tau_i).\]
Then, with suitable labeling, locally we can have $\kappa_1 \leq  \kappa_2 \leq \cdots \leq \kappa_n$.

From now on, we shall call the above special local (tangent) frame $\{\tau_i\}_{i=1}^{n}$ the {\it adapted moving frame} around point $p$.

\begin{lem}
	On each connected component $\Omega$ of $\Sigma_A^-$, let $\{\tau_j\}_{j=1}^{n}$ be the adapted moving frame around a point $p$ in $\Omega$. Then, for any eigenvalue $\kappa_i$ of multiplicity one and at the point $p$, we have
\begin{equation}
	{\nabla{\kappa}_i}={(\nabla{A})(\tau_i,\tau_i)},
\end{equation}
\begin{equation}
	{\Delta{\kappa}_i} ={(\Delta{A})(\tau_i,\tau_i)}-2\sum_{l \neq i}\frac{\Lab{{(\nabla{A})(\tau_l,\tau_i)}}^2}{\kappa_l-\kappa_i}.
\end{equation}
	\label{lem:laplacian_k1}
\end{lem}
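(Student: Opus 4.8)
The plan is to run the standard first- and second-order perturbation computation for a simple eigenvalue of the symmetric tensor $A$, carried out in the adapted moving frame, and then simply read off the two identities; the only point that requires genuine care is the differentiability of $\kappa_i$ near $p$, which is exactly what the structure theory of Codazzi tensors provides. \textbf{Step 1 (regularity and setup).} Since $p$ lies in a connected component $\Omega$ of $\Sigma_A^-$, the function $E_A$ is constant near $p$, so by standard perturbation theory (as in \cite{De:80, Singley:75}) every eigenvalue of $A$ has locally constant multiplicity near $p$; hence $\kappa_i$, being simple at $p$, stays simple — and therefore smooth — on a neighborhood $U\ni p$. By Lemma~\ref{lma:derzinski}(2) the corresponding eigenline is a differentiable distribution, so after shrinking $U$ we may fix a smooth unit vector field $\tau_i$ on $U$ with $A(\tau_i,\cdot)=\kappa_i\, g(\tau_i,\cdot)$, and complete it to the adapted frame $\{\tau_j\}$. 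In particular $\kappa_i = A(\tau_i,\tau_i)$ on $U$, so $\nabla\kappa_i$ and $\Delta\kappa_i$ are meaningful.

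\textbf{Step 2 (the computation).} Differentiating $\kappa_i = A(\tau_i,\tau_i)$ in a direction $X$ gives $\nabla_X\kappa_i = (\nabla_X A)(\tau_i,\tau_i) + 2A(\nabla_X\tau_i,\tau_i)$, and the last term vanishes because $A(\nabla_X\tau_i,\tau_i) = \kappa_i\, g(\nabla_X\tau_i,\tau_i) = \tfrac{\kappa_i}{2}X\big(g(\tau_i,\tau_i)\big) = 0$; this is the first identity. Differentiating $X\mapsto(\nabla_X A)(\tau_i,\tau_i)$ once more and tracing over the frame yields
\[ \Delta\kappa_i = (\Delta A)(\tau_i,\tau_i) + 2\sum_{j}(\nabla_{\tau_j}A)(\nabla_{\tau_j}\tau_i,\tau_i), \]
where $\Delta A = \tr\nabla^2 A$ is the rough Laplacian and one keeps track of $\nabla^2_{X,Y}A = \nabla_X\nabla_Y A - \nabla_{\nabla_X Y}A$. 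To finish, write $\nabla_{\tau_j}\tau_i = \sum_k \Gamma_{ji}^k\tau_k$ with $\Gamma_{ji}^k = g(\nabla_{\tau_j}\tau_i,\tau_k)$, noting $\Gamma_{ji}^i=0$. For $k\neq i$, differentiating the off-diagonal eigen-relation $A(\tau_i,\tau_k)=0$ in the direction $\tau_j$ and using the antisymmetry $\Gamma_{ji}^k = -\Gamma_{jk}^i$ (which comes from $g(\tau_i,\tau_k)=0$) gives $\Gamma_{ji}^k = (\nabla_{\tau_j}A)(\tau_i,\tau_k)/(\kappa_i-\kappa_k)$, which is legitimate precisely because $\kappa_i$ is simple, so $\kappa_k\neq\kappa_i$ for every $k\neq i$. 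Substituting, using the symmetry of $\nabla_{\tau_j}A$ in its last two slots, and summing over $j$ turns $\sum_j\big((\nabla_{\tau_j}A)(\tau_l,\tau_i)\big)^2$ into $|(\nabla A)(\tau_l,\tau_i)|^2$, and after relabeling $k\mapsto l$ one arrives at exactly the claimed formula.

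\textbf{Main obstacle.} The algebra above is routine eigenvalue perturbation and uses only that $A$ is symmetric; the genuinely non-trivial ingredient, and the reason Lemma~\ref{lma:derzinski} is invoked, is Step~1 — that a simple eigenvalue of the Codazzi tensor $A$ is smooth on a neighborhood of $p$ and that its eigenline is a differentiable distribution, so that the adapted frame exists and the objects $\nabla\kappa_i$, $\Delta\kappa_i$ make sense. A secondary point to be careful with is fixing the conventions for the rough Laplacian $\Delta A$ and for $\nabla^2 A$, so that no spurious curvature terms appear when the trace is taken.
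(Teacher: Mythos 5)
Your proposal is correct and follows essentially the same route as the paper: differentiate $\kappa_i=A(\tau_i,\tau_i)$ in the adapted frame, trace to get $\Delta\kappa_i=(\Delta A)(\tau_i,\tau_i)+2\sum_j(\nabla_{\tau_j}A)(\nabla_{\tau_j}\tau_i,\tau_i)$, and convert the cross term using the off-diagonal relation $(\nabla_{\tau_j}A)(\tau_l,\tau_i)=(\kappa_i-\kappa_l)\langle\nabla_{\tau_j}\tau_i,\tau_l\rangle$ obtained from differentiating $A(\tau_l,\tau_i)=0$ and $\langle\tau_l,\tau_i\rangle=0$. The only cosmetic difference is that the paper evaluates the Laplacian at $p$ via a radially parallel frame while you keep the $\nabla^2$ connection terms explicitly, and your Step 1 regularity discussion is handled in the paper's surrounding setup (Derdzi\'nski/Singley) rather than inside the proof.
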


\begin{proof}
	We shall use the convention  that $\nabla_j$ means $\nabla_{\tau_j}$. With the adapted moving frame $\{\tau_j\}_{j=1}^{n}$, we have
	\begin{equation}
		\left\langle \tau_i,\, \tau_i \right\rangle=1\quad \implies\quad \left\langle\nabla_j\tau_i, \tau_i\right\rangle=0\quad \forall\ j =1, \cdots, n.
		\label{equ:tii}
	\end{equation}
	Then, the first conclusion (2.1) follows easily:  for any local vector field $\tau$ around $p$, 
	\begin{equation}
		{\nabla_{\tau}{\kappa}_i}={\nabla_{\tau} (A(\tau_i,\tau_i))}=(\nabla_{\tau} A)(\tau_i,\tau_i)+2A(\nabla_{\tau} \tau_i,\tau_i)
		=(\nabla_{\tau} A)(\tau_i,\tau_i).
		\label{equ:eigen_ii}
	\end{equation}
	% where we used (\ref{equ:tii}) i.e. $\nabla_{\tau}\tau_i\perp\tau_i$.
	
	\smallskip
	Next, for any $l\ne i$, we have
	\begin{equation}
		\langle \tau_l,\, \tau_i \rangle=0\quad \implies\quad \left\langle\nabla_j\tau_l,\,\tau_i\right\rangle+\left\langle\tau_l,\,\nabla_j\tau_i\right\rangle=0 \quad \forall\ j.
		\label{equ:tij1}
	\end{equation}
	We can differentiate (\ref{equ:tii}) once more  to get
	\begin{equation}
		\langle \nabla_j\tau_i,\, \tau_i \rangle=0\quad \implies \quad
		\langle \nabla_j\nabla_j\tau_i,\, \tau_i \rangle+\langle \nabla_j\tau_i,\, \nabla_j\tau_i \rangle=0.
		\label{equ:tij2}
	\end{equation}

	From $A(\tau_l,\tau_i)=0$ for any  $l\ne i$, we get ${\nabla_j (A(\tau_l,\tau_i))}=0$, which implies that
	\begin{equation}
		\begin{aligned}
	(\nabla_j A)(\tau_l,\tau_i)&=-A(\nabla_j \tau_l,\tau_i)-A(\tau_l,\nabla_j \tau_i)\\
	&=-\kappa_i\left\langle\nabla_j \tau_l,\tau_i\right\rangle-\kappa_l\left\langle\tau_l,\nabla_j \tau_i\right\rangle\\
	&=-(\kappa_l-\kappa_i)\left\langle\tau_l,\nabla_j \tau_i\right\rangle,
		\end{aligned}
		\label{equ:eigen_li}
	\end{equation}
	where we have used  (\ref{equ:tij1}) in the last equality.

	Let $\xi_i=\tau_i(p)$ and extend $\xi_i$ by parallel translation
	along radial geodesics emanating from $p$. Then, at $p$, we have
	\begin{equation}
		\begin{aligned}
			{\Delta{\kappa}_i}&=\sum_j{\nabla_{\xi_j}\nabla_{\xi_j}(A(\tau_i,\tau_i))}\\
			&=\sum_j{\nabla_{\xi_j}((\nabla_{\xi_j} A)(\tau_i,\tau_i))}\\
			&={(\Delta A)(\tau_i,\tau_i)}+2\sum_j(\nabla_{\xi_j} A)(\nabla_{\xi_j}\tau_i,\tau_i)\\
			&={(\Delta A)(\tau_i,\tau_i)}+2\sum_j(\nabla_{j} A)(\nabla_{j}\tau_i,\tau_i).
		\end{aligned}
		\label{equ:eigen_ii_laplacian}
	\end{equation}
	Here,  in the second equality, we have used (\ref{equ:tii}).
	Now,  it remains to compute and analyze the term $(\nabla_j A)(\nabla_j\tau_i,\tau_i)$.  
	By using $A(\nabla_j\tau_i,\tau_i)=0$, we obtain
	\begin{equation}
		(\nabla_j A)(\nabla_j\tau_i,\tau_i)+A(\nabla_j\nabla_j\tau_i,\tau_i)+A(\nabla_j\tau_i,\nabla_j\tau_i)=0.
		\label{equ:key}
	\end{equation}
	For the second term in (\ref{equ:key}), as $\nabla_j\tau_i=\sum_{l\ne i}\left\langle\nabla_j\tau_i,\tau_l\right\rangle\tau_l$, it follows from (\ref{equ:tij2}) that
	\begin{equation*}
		A(\nabla_j\nabla_j\tau_i,\tau_i)=-\kappa_i\left\langle\nabla_j\tau_i,\nabla_j\tau_i\right\rangle=-\sum_{l\ne i}\kappa_i(\left\langle\nabla_j\tau_i,\tau_l\right\rangle)^2.
	\end{equation*}
	For the last term in (\ref{equ:key}), we have
	\begin{equation*}
		A(\nabla_j\tau_i,\nabla_j\tau_i)=\sum_{l\ne i}\kappa_l(\left\langle\nabla_j\tau_i,\tau_l\right\rangle)^2.
	\end{equation*}
	Hence, by using (\ref{equ:eigen_li}), we get
	\begin{equation}
			(\nabla_j A)(\nabla_j\tau_i,\tau_i)=-\sum_{l\ne i}(\kappa_l-\kappa_i)(\left\langle\nabla_j\tau_i,\tau_l\right\rangle)^2=-\sum_{l \neq i}\frac{\Lab{{(\nabla_j{A})(\tau_l,\tau_i)}}^2}{\kappa_l-\kappa_i}.
		\label{equ:eigen_}
	\end{equation}
	The second conclusion (2.2) in Lemma 2.3 now follows from (\ref{equ:eigen_ii_laplacian}) and (\ref{equ:eigen_}).
\end{proof}

\subsection{Translating solitons}
Recall the following well-known basic properties of translating solitons for the mean curvature flow (see, e.g., \cite{MSS:15,Xin:15,SS:20}).

\begin{lem}\label{pre:translator}
	Let $f:\Sigma^n\to\R^{n+1}$ be a complete immersed two-sided translating soliton satisfying Eq. (\ref{eq:translator}). Let $A=(h_{ij})$ be the second
	fundamental form, 
	$\Delta^f := \Delta ^\Sigma+\lan \nabla \cdot ,\  \ee_{n+1} \ran$ be the drift Laplacian on $\Sigma$. Then
	\begin{enumerate}
		\item $\Delta^f H+\Lab{A}^2H=0$,
		\item $\Delta^f A+\Lab{A}^2A=0$,
		\item $\Delta^f (\Lab{A}^2) +2\Lab{A}^4 -2\Lab{\nabla A}^2=0$.
	\end{enumerate}
\end{lem}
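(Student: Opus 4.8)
The plan is to derive all three identities from the translator equation \eqref{eq:translator} together with the structure equations of the immersion (Gauss--Weingarten), the Codazzi equation for the second fundamental form, and Simons' identity. Write $W=\ee_{n+1}$ and let $D$ denote the flat connection of $\R^{n+1}$; throughout I use the sign conventions $D_{e_i}\nu=-h_i{}^{j}e_j$ and $D_{e_i}e_j=\nabla_i e_j-h_{ij}\nu$, which are the ones compatible with \eqref{eq:translator}. The first step is to record two consequences of $W$ being parallel together with \eqref{eq:translator}. Decomposing $W=W^{\top}+\la W,\nu\ra\nu=W^{\top}+H\nu$, where $W^{\top}$ is the tangential part, and differentiating $H=\la W,\nu\ra$ gives $\nabla_i H=\la W,D_{e_i}\nu\ra=-h_{ij}(W^{\top})^{j}$, i.e. $\nabla H=-A(W^{\top})$; differentiating $W^{\top}=W-H\nu$ and taking tangential parts gives $\nabla_i (W^{\top})^{k}=H\,h_i{}^{k}$.

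For part (1) I would compute the Hessian
\begin{equation*}
	\nabla_i\nabla_j H=-(\nabla_i h_{jk})(W^{\top})^{k}-h_{jk}\nabla_i(W^{\top})^{k}=-(\nabla_i h_{jk})(W^{\top})^{k}-H\,h_{jk}h_i{}^{k},
\end{equation*}
and then trace in $i,j$. The Codazzi equation yields $g^{ij}\nabla_i h_{jk}=\nabla_k H$, so $\Delta^{\Sigma}H=-\la\nabla H,W^{\top}\ra-|A|^2H$; since $\nabla H$ is tangential, $\la\nabla H,W^{\top}\ra=\la\nabla H,\ee_{n+1}\ra$, which is exactly $\Delta^{f}H+|A|^2H=0$.

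For part (2) I would invoke Simons' identity for a hypersurface of $\R^{n+1}$,
\begin{equation*}
	\Delta^{\Sigma}h_{ij}=\nabla_i\nabla_j H+H\,h_{ik}h^{k}{}_{j}-|A|^2h_{ij},
\end{equation*}
and substitute the expression for $\nabla_i\nabla_j H$ obtained above: the two occurrences of $H\,h_{ik}h^{k}{}_{j}$ cancel, and, using Codazzi again (so that $(\nabla_i h_{jk})(W^{\top})^{k}$ is the $(i,j)$-entry of $\nabla_{W^{\top}}A$) together with the fact that $\nabla A$ is tangential, the surviving first-order term assembles into $-\la\nabla A,\ee_{n+1}\ra$. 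This gives $\Delta^{\Sigma}A+\la\nabla A,\ee_{n+1}\ra+|A|^2A=0$, i.e. $\Delta^{f}A+|A|^2A=0$. Part (3) is then a Bochner computation: from $\Delta^{\Sigma}|A|^2=2\la A,\Delta^{\Sigma}A\ra+2|\nabla A|^2$ and $\la\nabla|A|^2,\ee_{n+1}\ra=2\la A,\nabla_{W^{\top}}A\ra$ one gets $\Delta^{f}(|A|^2)=2\la A,\Delta^{f}A\ra+2|\nabla A|^2$, and inserting part (2), which gives $\la A,\Delta^{f}A\ra=-|A|^4$, yields $\Delta^{f}(|A|^2)+2|A|^4-2|\nabla A|^2=0$.

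The only genuinely substantive input is Simons' identity; everything else is bookkeeping with the Gauss--Weingarten and Codazzi equations. The point that needs care is making the sign conventions for $\nu$, $A$ and \eqref{eq:translator} mutually consistent, and keeping track of which tensors are tangential, so that inner products against the ambient unit vector $\ee_{n+1}$ can be replaced by inner products against its tangential part $W^{\top}$.
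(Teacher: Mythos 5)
Your proof is correct, and it is essentially the standard derivation: the paper itself states Lemma \ref{pre:translator} without proof, citing \cite{MSS:15,Xin:15,SS:20}, and your argument (differentiate $H=\la \ee_{n+1},\nu\ra$ using that $\ee_{n+1}$ is parallel, trace with Codazzi for (1), combine with Simons' identity for (2), then the Bochner formula for (3)) is exactly the computation those references carry out. One small slip in your stated conventions: $D_{e_i}\nu=-h_i{}^{j}e_j$ and $D_{e_i}e_j=\nabla_i e_j-h_{ij}\nu$ are not mutually consistent (differentiating $\la e_j,\nu\ra=0$ forces the Gauss formula to read $D_{e_i}e_j=\nabla_i e_j+h_{ij}\nu$ with that Weingarten map); this is harmless, since the identities you actually use, $\nabla H=-A(W^{\top})$ and $\nabla_i(W^{\top})^{k}=H\,h_i{}^{k}$, are the correct ones under the consistent convention, and the three resulting equations are independent of the choice of sign convention.
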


In the proof of Theorem \ref{thm:translator}, we shall need the following version of the Omori-Yau maximal principle (see \cite{HIMW:21}).

\begin{lem} \textup{{\bf (Omori-Yau Maximal Principle)}} \label{lemmaomoriyau}
	Let $(M, g)$ be a complete Riemannian manifold with Ricci curvature bounded from below. Let $f: M\rightarrow \R$ be a smooth function that is bounded below. Then there is a sequence $p_k$ in $M$ such that
	$$  f(p_k) \rightarrow \inf_Mf, \quad \nabla f(p_k) \rightarrow 0, \quad \liminf \Delta f(p_k) \geq 0.  $$ 
	Moreover,  the theorem remains true if we replace the assumption that $f$ is smooth by the assumption that $f$ is smooth on $\{f<\alpha\}$ for some $\alpha >\inf_Mf$.
\end{lem}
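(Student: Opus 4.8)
The statement is classical (due to Omori and Yau, with the stated generalization essentially that of Pigola--Rigoli--Setti), and the argument I would give is a barrier/comparison argument that reduces everything to the ordinary second-derivative test at a minimum. First I would dispose of the trivial case: if $\inf_M f$ is attained at some $p_0$, take the constant sequence $p_k\equiv p_0$, since at an interior minimum $\nabla f(p_0)=0$ and $\Delta f(p_0)\geq 0$. So assume from now on that the infimum is not attained.

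The core of the proof is the construction of a proper comparison function with controlled first and second derivatives. Fix $o\in M$ and set $r(x)=\operatorname{dist}(o,x)$. From $\Ric\geq-(n-1)K$ with $K\geq 0$ and the Laplacian comparison theorem one has $\Delta r\leq(n-1)\sqrt{K}\coth(\sqrt{K}\,r)$ away from $o$, in the barrier sense, whence a bound of the form $r\,\Delta r\leq c(1+r)$. Setting $\gamma=\sqrt{1+r^2}$, which is smooth near $o$ and globally Lipschitz with $|\nabla\gamma|\leq 1$, the identity $|\nabla\gamma|^2+\gamma\,\Delta\gamma=1+r\,\Delta r$ gives $\Delta\gamma\leq c'$ on all of $M$ in the barrier sense, with $c'$ depending only on $n$ and $K$. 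Thus $\gamma$ is proper (its sublevel sets are bounded, hence precompact by completeness), has bounded gradient, and has Laplacian bounded above.

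Now I would run the perturbation. Choose a minimizing sequence $x_k$, so $f(x_k)\to\inf_M f$, and set $\epsilon_k=\big(k(1+\gamma(x_k))\big)^{-1}$. Since $f+\epsilon_k\gamma\geq\inf_M f+\epsilon_k\gamma\to+\infty$ along any divergent sequence, the function $f+\epsilon_k\gamma$ attains a global minimum at some point $p_k$. The first-order condition gives $\nabla f(p_k)=-\epsilon_k\nabla\gamma(p_k)$, so $|\nabla f(p_k)|\leq\epsilon_k\leq 1/k\to 0$; the second-order condition gives $\Delta f(p_k)\geq-\epsilon_k\Delta\gamma(p_k)\geq-\epsilon_k c'$, so $\liminf_k\Delta f(p_k)\geq 0$; and from $\inf_M f\leq f(p_k)\leq(f+\epsilon_k\gamma)(p_k)\leq(f+\epsilon_k\gamma)(x_k)=f(x_k)+\epsilon_k\gamma(x_k)\leq f(x_k)+1/k$ we get $f(p_k)\to\inf_M f$. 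For the final clause, note that $f(p_k)\to\inf_M f<\alpha$, so $p_k$ eventually lies in the open set $\{f<\alpha\}$, where $f$ is smooth and the above derivative computations are legitimate; the only additional point is that $f+\epsilon_k\gamma$ attains a minimum, which holds as soon as $f$ is lower semicontinuous and bounded below.

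The one genuine technical point — the step I would flag as the main obstacle — is the regularity of $r$: the comparison bound on $\Delta r$ is only valid in the barrier sense, and $p_k$ may lie in $\operatorname{Cut}(o)$, where $\gamma$ is merely Lipschitz. This is handled by Calabi's trick: pick $o'$ slightly down the minimizing geodesic from $o$ to $p_k$, so that $\tilde r=\operatorname{dist}(o',\cdot)+\operatorname{dist}(o,o')$ is smooth near $p_k$, satisfies $\tilde r\geq r$ with equality at $p_k$, and obeys the same Laplacian bound up to $\delta$; then $\tilde\gamma=\sqrt{1+\tilde r^2}$ dominates $\gamma$ with equality at $p_k$, so $p_k$ is a local minimum of the smooth function $f+\epsilon_k\tilde\gamma$, and applying the derivative tests to it and letting $\delta\to 0$ recovers $|\nabla f(p_k)|\leq\epsilon_k$ and $\Delta f(p_k)\geq-\epsilon_k c'$. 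Everything else is routine bookkeeping.
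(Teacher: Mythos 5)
Your argument is correct and essentially complete. Note, though, that the paper does not prove this lemma at all: it is quoted as a known result with a reference to \cite{HIMW:21}, so there is no internal proof to compare against. What you have written is the classical proof of the Omori--Yau principle (in its minimum form): perturb $f$ by $\epsilon_k\gamma$ with $\gamma=\sqrt{1+r^2}$ proper, $|\nabla\gamma|\le 1$ and $\Delta\gamma$ bounded above via Laplacian comparison under $\mathrm{Ric}\ge -(n-1)K$, apply the first- and second-derivative tests at the global minimum of $f+\epsilon_k\gamma$, and handle the cut locus by Calabi's trick. Your choice $\epsilon_k=\bigl(k(1+\gamma(x_k))\bigr)^{-1}$ correctly forces $f(p_k)\to\inf_M f$, and your treatment of the final clause is the right one: since $f(p_k)\le f(x_k)+1/k<\alpha$ eventually, the minimizers lie in the open set $\{f<\alpha\}$ where $f$ is smooth, and only lower semicontinuity of $f$ on $M$ (which holds in the paper's application, where $\vphi$ is continuous) is needed for the minimum to be attained. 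This matches how the lemma is actually used in Section 3, so the proposal supplies a self-contained justification for a step the paper outsources to the literature.
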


\begin{rmk} \label{OY} Observe that, by the translator equation (\ref{eq:translator}), $H\le1$. This implies that if $\Sigma^n\subset \R^{n+1}$ is 2-convex then $\left| A \right|^2\le nH^2 \le n$. On the other hand, by the Gauss equation, $R_{ij}=Hh_{ij}-h_{ik}h_{kj}$.  Thus,  for any 2-convex translator $\Sigma$, its Ricci curvature is automatically bounded from below so  Lemma 2.4 applies.
\end{rmk}

\subsection{Self-expanders}
For self-expanders of the mean curvature flow, we have the following well-known basic properties (see, e.g.,~\cite{Ding:20}).

\begin{lem} \label{pre:expander}
	Let $x: \Sigma^n \rightarrow \R^{n+1}$ be a complete immersed two-side self-expanders satisfying Eq. (\ref{eq:selfexpander}). Let $A=(h_{ij})$ be the second fundamental form, $\mL:=\Delta^{\Sigma}+\la \na \cdot,x \ra$ be the drift Laplacian on $\Sigma$. Then
	\begin{enumerate}
		\item $ \mL H + H(1+|A|^2)=0 $,
		\item $ \mL A + A(1+|A|^2) = 0 $,
		\item $ \mL (|A|^2) + 2|A|^2(1+|A|^2) - 2|\na A|^2 =0 $.
	\end{enumerate}
\end{lem}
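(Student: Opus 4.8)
The final statement to prove is Lemma~\ref{pre:expander}, the three standard evolution-type identities for self-expanders under the drift Laplacian $\mL=\Delta^\Sigma+\la\na\cdot,x\ra$. These are the direct analogues of the translator identities in Lemma~\ref{pre:translator}, and the proof is a computation in the same spirit: differentiate the defining equation $H=\la x,\nu\ra$ twice, invoke the Gauss and Codazzi equations, and repeatedly use the structure equations for an immersion into $\R^{n+1}$ (namely $\bar\na_i x = e_i$, $\bar\na_i e_j = -h_{ij}\nu$, $\bar\na_i\nu = h_{ij}e_j$, where $\bar\na$ is the ambient connection). I would set up an orthonormal frame $\{e_i\}$ on $\Sigma$ and compute in a normal frame at the point of interest.

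First I would establish the two basic first-order facts: $\na_i\la x,\nu\ra = h_{ij}\la x,e_j\ra$ and $\na_i H = \na_i\la x,\nu\ra$, consistent with $H=\la x,\nu\ra$. Tracing $\bar\na_i x=e_i$ gives $\Delta^\Sigma \frac{1}{2}|x|^2 = n + H\la x,\nu\ra$ and $\na_i\frac12|x|^2 = \la x,e_i\ra$, which shows $\mL$ acts naturally on functions built from the position vector. For part (1), I would compute $\Delta^\Sigma H = \Delta^\Sigma\la x,\nu\ra$ by applying $\na_j$ to $h_{ij}\la x,e_j\ra$: the term where the derivative lands on $h_{ij}$ produces $(\na_j h_{ij})\la x, e_j\ra = (\na_i H)\la x,e_j\ra\, $-type contributions via Codazzi (Simons-type rearrangement), and the term where it lands on $\la x,e_j\ra$ produces $h_{ij}(\delta_{ij} - h_{ij}\la x,\nu\ra)$, i.e. $H - |A|^2\la x,\nu\ra = H - |A|^2 H$. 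Collecting, the transport term $\la\na H,x\ra$ exactly absorbs the Codazzi contribution, yielding $\mL H = -H - |A|^2 H$, which is (1).

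For part (2), the plan is the same tensorial calculation one level up: apply $\na_k\na_k$ to $h_{ij}$, use the Codazzi equation to commute derivatives into $\na_i\na_j H$, use Simons' identity $\Delta^\Sigma h_{ij} = \na_i\na_j H + H h_{ip}h_{pj} - |A|^2 h_{ij}$, and then substitute $\na_i\na_j H$ using part (1) together with the first-order identities for $\na H$; the position-vector terms that appear reorganize into $\la\na h_{ij},x\ra$ plus $-h_{ij} - |A|^2 h_{ij}$, giving $\mL A + A(1+|A|^2)=0$. Part (3) then follows by tracing (2) against $A$: $\mL|A|^2 = 2\la \mL A, A\ra + 2|\na A|^2 = -2|A|^2(1+|A|^2) + 2|\na A|^2$, using that $\mL$ obeys the product rule with the same gradient-correction term ($\mL(fg)$-type Bochner identity), which rearranges to (3).

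The main obstacle — really the only nontrivial point — is bookkeeping the first-order position-vector terms in the second-order computations for (1) and (2): one has to see that the pieces of the form $(\text{Codazzi divergence})\cdot\la x, e_j\ra$ precisely reconstitute the drift term $\la\na(\cdot),x\ra$ rather than leaving an extra tangential remainder. This is where care with the Codazzi equation $\na_i h_{jk}=\na_j h_{ik}$ (and its trace $\na_j h_{ij}=\na_i H$) is essential. Everything else is routine and parallels the translator case, so I would either carry out the computation in a couple of displayed lines or simply cite the standard references (\cite{Ding:20}, and the analogous computations leading to Lemma~\ref{pre:translator}) since these identities are well known.
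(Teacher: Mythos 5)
Your outline is the standard computation and it is the right approach; the paper itself offers no proof of this lemma (it is quoted as well known, with a citation to Ding), so there is nothing more specific to compare against. One concrete bookkeeping point, though, which as written makes the signs fail to close: the structure equations you state, $\bar{\nabla}_i\nu=h_{ij}e_j$ and $\bar{\nabla}_ie_j=-h_{ij}\nu$, correspond to the orientation in which the mean curvature vector is $-H\nu$, i.e.\ the expander equation would read $H=-\la x,\nu\ra$. If you keep the paper's equation $H=\la x,\nu\ra$ with your conventions, then $\nabla_iH=h_{ij}\la x,e_j\ra$ and the Codazzi piece of $\Delta^{\Sigma}H$ comes in as $+\la\nabla H,x\ra$, so adding the drift term doubles it rather than absorbing it, and you end up with $\Delta^{\Sigma}H=\la\nabla H,x\ra+H-|A|^2H$ instead of item (1). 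With the compatible convention $h_{ij}=\la\bar{\nabla}_{e_i}e_j,\nu\ra$, $\bar{\nabla}_i\nu=-h_{ij}e_j$ (so $\vec{H}=H\nu$ and $H=\la x,\nu\ra$ is the correct soliton equation), one gets $\nabla_iH=-h_{ij}\la x,e_j\ra$ and then $\Delta^{\Sigma}H=-\la\nabla H,x\ra-H-|A|^2H$, which is exactly (1). Similarly, for (2) note that the trace identity (1) alone cannot be ``substituted'' for the full Hessian of $H$: you must recompute $\nabla_i\nabla_jH$ directly from $\nabla_jH=-h_{jk}\la x,e_k\ra$, which gives $\nabla_i\nabla_jH=-\la\nabla h_{ij},x\ra-h_{ij}-Hh_{ik}h_{kj}$ after Codazzi, and then Simons' identity yields $\mL h_{ij}+(1+|A|^2)h_{ij}=0$; your derivation of (3) by tracing (2) and adding $2|\nabla A|^2$ is correct as stated.
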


Finally, we recall the definition of asymptotically conical self-expander and some of its geometric properties. We say that a cone $\mC\subset \R^{n+1}$ is a {\it regular cone} with vertex at the origin, if $\mC = \{\rho \Gamma^{n-1}, 0\leq \rho <\infty\}$, where $\Gamma^{n-1}$ is a smooth connected closed embedded submanifold of the unit sphere $\rS^n$. Note that the induced metric on $\mC$ satisfies $g_{\mC} = dr\otimes dr + r^2g_{\Gamma}$, for $r$ the radial variable.

A self-expander $\Sigma^n \subset \R^{n+1}$ is {\it asymptotically conical} if it is smoothly asymptotic to a regular cone $\mC$ such that
\begin{equation*}
	\mC = \lim_{t\rightarrow 0^+} \sqrt{t}\Sigma,
\end{equation*}
in the sense that, for any $R>0$ and $k\in \N$, $\sqrt{t}\Sigma\cap (\bar{B}_R\backslash B_{1/R})$ converges to $\mC\cap (\bar{B}_R\backslash B_{1/R})$ in the $C^k$ topology, as $t\rightarrow 0^+$ (see, e.g., \cite{LWang:14}).

Note that, outside a compact set, the asymptotically conical self-expander can be parametrized by a graph over the asymptotic cone $\mC$. Namely, we have

\begin{lem} \textup{\bf (Ding \cite{Ding:20})} \label{lem:graph}
	Let $\Sigma^n \subset \R^{n+1}$ be an asymptotically conical self-expander with the asymptotic cone $\mC$. Then for $R>0$ sufficiently large, there is a function $u \in C^{\infty}(\mC\backslash B_R(0))$ so that
	$$  \graph\ u:= \{ \tilde{p}+u(\tilde{p})\nu_{\mC}(\tilde{p}) \ |\ \tilde{p}\in \mC \backslash B_R(0) \} \subset \Sigma.  $$
	Moreover, the function $u$ satisfies
	\begin{equation} \label{eq:decayofgraphfn}
		|\na^i_{\mC}u(\tilde{p})| = O(r^{-i-1}), \quad \text{for}\ i=0,1,
	\end{equation}
	as $r=|\tilde{p}|\rightarrow \infty$. Here and in the following, big O means $ O(r^{-i-1}) \leq cr^{-i-1} $ for some constant $c$.
\end{lem}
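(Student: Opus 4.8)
The plan is to realize the end of $\Sigma$ as a normal graph over the end of $\mC$ with a preliminary (non-sharp) decay coming from the asymptotic hypothesis, and then to upgrade this to the claimed rate by viewing the self-expander equation as a quasilinear elliptic equation for the graph function on the conical end.

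First I would produce the graph. By definition of being asymptotically conical, for every dyadic scale $t=4^{-j}$ the rescaled surface $\sqrt t\,\Sigma$ is, on the fixed annulus $\bar B_2\setminus B_{1/2}$, a normal graph over $\mC$ whose graph function $v_t$ has $\|v_t\|_{C^k}\to0$; since $\mC$ is a cone, $\nu_{\mC}$ is scale invariant, so the graphs at consecutive scales agree on their overlaps. Patching them and undoing the rescaling yields a single $u\in C^\infty(\mC\setminus B_R(0))$ with $\graph u\subset\Sigma$ for $R$ large, the point being that the $C^1$-closeness of $\sqrt t\,\Sigma$ to $\mC$ leaves no room for a second, non-graphical sheet. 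Moreover, $v_t(q)=\sqrt t\,u(q/\sqrt t)$ forces $\na_q^i v_t=t^{(1-i)/2}(\na^i_{\mC}u)(q/\sqrt t)$, so the $C^k$ convergence of $v_t$ translates, with $r\sim 1/\sqrt t$, into the preliminary bounds $|u|=o(r)$, $|\na_{\mC}u|=o(1)$, $|\na^2_{\mC}u|=o(r^{-1})$ as $r=|\tilde p|\to\infty$.

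Next I would set up and analyze the graph equation. Writing $\graph u=\{\tilde p+u(\tilde p)\nu_{\mC}(\tilde p)\}$ and imposing $H=\la x,\nu\ra$ gives a quasilinear elliptic equation for $u$ which, linearized at $u\equiv0$, is a drift-Laplacian-type operator $Lu=\Delta_{\mC}u+\la\na_{\mC}u,x\ra+c\,u$ with $c=|A_{\mC}|^2-1$ a bounded function (since $|A_{\mC}|^2=O(r^{-2})$), and on the cone the drift is simply $\la\na_{\mC}u,x\ra=r\,\pa_r u$, because the position vector is radial on $\mC$. Since $\mC$ itself is not a self-expander, the equation takes the schematic form $Lu=-H_{\mC}+Q(u,\na u,\na^2 u)$, where the forcing term is the mean curvature of a regular cone, $|H_{\mC}|=O(r^{-1})$, and the nonlinear error $Q$ is---by the preliminary bounds above---of size $o(r^{-1})$. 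Passing to the variable $s=\log r$ (in which $\mC$ looks like the cylinder $\R\times\Gamma$ and $r\,\pa_r=\pa_s$), the equation for $u$ is, to leading order in $s$, an ODE-dominated relation whose solutions are either $\asymp r$ or $O(r^{-1})$; the growing branch corresponds precisely to perturbing the (fixed) asymptotic cone and is ruled out by the preliminary bound $|u|=o(r)$, leaving $|u|=O(r^{-1})$. Concretely, I would make this rigorous by a comparison argument on the annuli $\{R<r<R'\}$ using the sub/super-solutions $\pm K r^{-1}$---for which $L(\pm K r^{-1})=\mp 2K r^{-1}+O(r^{-3})$ has a definite sign dominating the $O(r^{-1})$ right-hand side---letting $R'\to\infty$ and invoking the preliminary estimate to control the outer boundary; alternatively one can feed the problem into the Lockhart--McOwen Fredholm theory for elliptic operators on conical ends. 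Finally, interior Schauder estimates applied to the rescaled function $y\mapsto\rho^{-1}u(\rho y)$ on a fixed annulus (where it has $C^0$ norm $O(\rho^{-2})$ once $|u|=O(r^{-1})$ is known) upgrade this to $|\na_{\mC}u|=O(r^{-2})$.

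The step I expect to be the main obstacle is the decay upgrade. On a non-compact conical end one has to produce barriers that are simultaneously sharp (rate $r^{-1}$) and strong enough to dominate $u$ at infinity, and one has to verify that every term of the quasilinear error $Q$---which involves products of $u,\na u,\na^2 u$ against the conical curvature---is genuinely lower order than $r^{-1}$. This is exactly where the full strength of the $C^k$ asymptotic hypothesis is needed, rather than mere $C^1$-closeness to the cone.
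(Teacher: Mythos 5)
First, a point of comparison: the paper does not prove this lemma at all --- it is quoted from Ding \cite{Ding:20}, and the derivative estimates for $i\ge 2$ are delegated in Remark 2.6 to an induction following Lemma A.1 of \cite{Chodosh-CMS:20}. So your proposal is measured against the literature's argument rather than anything in this paper. Your overall strategy is the right one and is essentially how such statements are proved: patch the dyadic rescalings $\sqrt t\,\Sigma$ into a single normal graph with the preliminary bounds $|u|=o(r)$, $|\na_{\mC}u|=o(1)$, $|\na^2_{\mC}u|=o(r^{-1})$, and then exploit that on the conical end the expander equation is dominated by the radial drift, schematically $u-r\pa_r u=H\sqrt{1+|\na_{\mC} u|^2+\cdots}+\mathrm{err}=O(r^{-1})+\mathrm{err}$, which after integrating $\pa_r(u/r)$ from $r$ to $\infty$ (using $u/r\to 0$ to kill the linear-growth homogeneous mode) yields $u=O(r^{-1})$.

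However, as written the quantitative steps have genuine gaps. (i) The claim that the nonlinear error $Q$ is $o(r^{-1})$ ``by the preliminary bounds'' is false at that stage: terms such as $|A_{\mC}|\,|u|\,|\na_{\mC}u|$ or $(u-r\pa_r u)\cdot O(|\na_{\mC}u|^2)$ are only $O(r^{-1})\cdot o(r)\cdot o(1)=o(1)$, not $o(r^{-1})$, so a one-shot comparison against $r^{-1}$ cannot close; one needs an intermediate bootstrap (first conclude $u=o(1)$ from the ODE structure, then rerun the estimate to reach $O(r^{-1})$). (ii) The barrier argument has an outer-boundary defect: on $\{R<r<R'\}$ the barrier $\pm Kr^{-1}$ is of size $K/R'$ at $r=R'$, while $|u|$ is only known to be $o(R')$ there, so the comparison principle does not apply as stated; the standard repair is to add $\epsilon r$ (an approximate solution of your linearized operator, since $L(r)=O(r^{-1})$) and let $\epsilon\to 0$, or to integrate the radial ODE from infinity instead of using barriers. (iii) The final step, plain interior Schauder for $y\mapsto\rho^{-1}u(\rho y)$, does not give $|\na_{\mC}u|=O(r^{-2})$: under that rescaling the drift coefficient scales like $\rho^{2}$, so the operators are not uniformly elliptic-with-bounded-coefficients on the fixed annulus, and even treating the drift as forcing only yields $|\na_{\mC}u|=O(1)$. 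The radial derivative decay in fact comes for free from the identity $r\pa_r u=u-O(r^{-1})$, but the angular part requires differentiating the equation (or the weighted induction of \cite{Chodosh-CMS:20}, which is what the paper itself invokes). These are repairable defects within your framework, but each would have to be fixed for the proof to stand.
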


\begin{rmk}
	Actually, by the self-expander equation and an induction argument (see the proof of Lemma A.1 in \cite{Chodosh-CMS:20}), the estimate (\ref{eq:decayofgraphfn}) also holds for $i\geq 2$.
\end{rmk}

Let $\tilde{p} \in \mC\backslash B_R(0)$ for $R>0$ sufficiently large. Then there is a domain $U$ including $0$ in $\R^n$ and a local parametrization of $\mC$ in a neighborhood of $\tilde{p}$, $\tilde{F}: U \rightarrow \mC$ such that
$$\tilde{F}(0)=\tilde{p}, \quad \la \pa_i\tilde{F}(0),\pa_j\tilde{F}(0)\ra = \tilde{\delta}_{ij}, $$
and
$$\pa^2_{ij}\tilde{F}(0) = \tilde{h}_{ij}\nu_{\mC}(\tilde{p}) $$
with
$$\tilde{h}_{ij} = \la \pa^2_{ij}\tilde{F}, \nu_{\mC} \ra, \quad \tilde{h}_{ij}(0)=0 \ \text{if}\ i\neq j. $$
%Without loss of generality, we may assume $i=1$ is in the direction of radial $r$.
By Lemma \ref{lem:graph}, in a neighborhood of $ p=\tilde{p}+u(\tilde{p})\nu_{\mC}(\tilde{p})$, we can choose a local parametrization of $\Sigma$, $F: U \rightarrow \Sigma$ such that
$$ F(x) = \tilde{F}(x) + u(\tilde{F}(x))\nu_{\mC}(\tilde{F}(x)), $$
for $x\in U$. Then we have

\begin{lem} \textup{\bf (Ding \cite{Ding:20})} \label{lem:eqofsecondff}
	Let $F: U \rightarrow \Sigma$ be the parametrization as above such that $ F(x) = \tilde{F}(x) + u(\tilde{F}(x))\nu_{\mC}(\tilde{F}(x)) $, for $x\in U$. Then at the point $x=0$, we have
	$$  h_{ij} =c(x)\cdot \left \{ (\tilde{h}_{ij} -\tilde{h}_{ii}\tilde{h}_{jj}\tilde{\delta}^{ij}u + \pa_{ij}^2u ) \prod_{k}(1-\tilde{h}_{kk}u) + Q_{1ij}(x,u,\na_{\mC}u) \right \},  $$
	where $|Q_{1ij}(x,u,\na_{\mC}u)| \leq c|\tilde{p}|^{-1}|\na_{\mC}u|^2 + c|\tilde{p}|^{-2}|u||\na_{\mC}u| = O(r^{-5})$ and $c(x)=O(1)$, as $r\rightarrow \infty$.
	Also, we have 
	$$	g^{ij} = \tilde{\delta}^{ij}(1+2\tilde{h}_{ii}u) + Q_{2ij}(x,u,\na_{\mC}u), $$
	where $|Q_{2ij}(x,u,\na_{\mC}u)| \leq c|\tilde{p}|^{-2}u^2 + c|\na_{\mC}u|^2 = O(r^{-4})$, as $r\rightarrow \infty$.
\end{lem}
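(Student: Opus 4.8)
I would establish this by the direct normal–graph computation of Ding \cite{Ding:20}; the plan is as follows. Write $r=|\tilde p|$ and $w(x):=u(\tilde F(x))$, and recall from Lemma \ref{lem:graph} and the remark following it that $|\na_{\mC}^k u|=O(r^{-k-1})$ for all $k\ge 0$. Since $\mC=\{\rho\,\Gamma^{n-1}\}$ is a regular cone, its radial direction is a line through the origin and hence a principal direction with zero principal curvature, while the remaining principal curvatures at distance $r$ equal $r^{-1}$ times those of $\Gamma^{n-1}\subset\rS^n$; thus $|\tilde h|=O(r^{-1})$ and, differentiating, $|\na_{\mC}^k\tilde h|=O(r^{-1-k})$. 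I would then pull the metric, unit normal and second fundamental form of $\graph\ u$ back to $U$ through $F(x)=\tilde F(x)+w(x)\nu_{\mC}(\tilde F(x))$, using the Gauss--Weingarten relations for $\mC$,
\[ \pa^2_{ij}\tilde F = \tilde\Gamma^k_{ij}\pa_k\tilde F + \tilde h_{ij}\nu_{\mC}, \qquad \pa_i\nu_{\mC} = -\,\tilde h_{il}\,\tilde g^{lk}\pa_k\tilde F, \]
and evaluate everything at $x=0$, where by construction $\tilde g_{ij}(0)=\tilde\delta_{ij}$, $\tilde\Gamma^k_{ij}(0)=0$ and $\tilde h_{ij}(0)$ is diagonal.

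Differentiating $F$ once and using the Weingarten relation gives $\pa_iF(0)=(1-\tilde h_{ii}u)\pa_i\tilde F+(\pa_iw)\nu_{\mC}$, so $g_{ij}(0)=(1-\tilde h_{ii}u)^2\tilde\delta_{ij}+\pa_iw\,\pa_jw$. Inverting this matrix --- a diagonal matrix whose entries are $1+O(r^{-2})$ perturbed by a rank-one term of size $O(r^{-4})$ --- via a Neumann/Sherman--Morrison expansion yields $g^{ij}=\tilde\delta^{ij}(1+2\tilde h_{ii}u)+Q_{2ij}$ with $|Q_{2ij}|\le c\,r^{-2}u^2+c\,|\na_{\mC}u|^2=O(r^{-4})$, which is the stated formula for $g^{ij}$. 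For the unit normal of $\graph\ u$ at $F(0)$ I would use the cross-product (Gram determinant) expression: since $\pa_1\tilde F,\dots,\pa_n\tilde F,\nu_{\mC}$ is orthonormal at $0$, the wedge $\pa_1F\wedge\cdots\wedge\pa_nF$ equals, up to a fixed sign, $\prod_k(1-\tilde h_{kk}u)\,\nu_{\mC}$ plus a remainder tangential to $\mC$ of size $O(|\na_{\mC}u|)$, and its length is the $O(1)$ quantity one calls $c(x)^{-1}$; this is precisely where the Jacobian factor $\prod_k(1-\tilde h_{kk}u)$ enters the final formula.

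Differentiating $F$ a second time and using $\pa^2_{ij}\tilde F(0)=\tilde h_{ii}\tilde\delta_{ij}\nu_{\mC}$, $\pa^2_{ij}w(0)=\pa^2_{ij}u$ (the coordinate second derivative, which at $x=0$ agrees with the $\mC$-Hessian of $u$ since the Christoffel symbols vanish there), and the once-differentiated Weingarten relation --- whose normal component at $0$ is $\langle\pa^2_{ij}\nu_{\mC},\nu_{\mC}\rangle(0)=-\tilde h_{ii}\tilde h_{jj}\tilde\delta_{ij}$ --- one finds $\langle\pa^2_{ij}F,\nu_{\mC}\rangle(0)=\tilde h_{ij}-\tilde h_{ii}\tilde h_{jj}\tilde\delta^{ij}u+\pa^2_{ij}u$, because the remaining terms $\pa_iw\,\pa_j\nu_{\mC}$ and $\pa_jw\,\pa_i\nu_{\mC}$ are tangential to $\mC$. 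Pairing $\pa^2_{ij}F(0)$ with $c(x)$ times the unnormalized normal above then produces exactly this principal part multiplied by $\prod_k(1-\tilde h_{kk}u)$, plus the remainder $Q_{1ij}$ coming from pairing $\pa^2_{ij}F$ with the tangential part of the cross product; every summand of $Q_{1ij}$ carries a factor $\tilde h=O(r^{-1})$ or $\na_{\mC}\tilde h=O(r^{-2})$ together with either two factors of $\na_{\mC}u$ or one factor of $u$ and one of $\na_{\mC}u$, giving $|Q_{1ij}|\le c\,r^{-1}|\na_{\mC}u|^2+c\,r^{-2}|u|\,|\na_{\mC}u|=O(r^{-5})$.

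The conceptual content is slight; the two points requiring care are that the $x$-coordinates are normal for $\mC$ but not for $\graph\ u$ (so $\pa^2_{ij}u$ must be read as a coordinate second derivative, and one should not conflate the $\pa^2F$-data with intrinsic Hessians on $\graph\ u$), and the systematic tracking of orders --- the only genuine, though routine, obstacle --- where one repeatedly invokes that $\mC$ has curvature $O(r^{-1})$ and that each $\na_{\mC}$ applied to $u$ or to $\tilde h$ costs one power of $r$, so that all discarded terms land inside the claimed $Q_{1ij}$, $Q_{2ij}$ bounds.
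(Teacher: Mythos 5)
The paper contains no proof of this lemma to compare against: it is quoted directly from Ding \cite{Ding:20}, and the stated decay $|\na_{\mC}^iu|=O(r^{-i-1})$ from Lemma \ref{lem:graph} is all the paper uses. Your normal-graph computation is correct and is essentially the standard derivation behind the cited result: working at $x=0$ (orthonormal frame, diagonal $\tilde h_{ij}$, vanishing Christoffel symbols so $\pa^2_{ij}w(0)$ is the $\mC$-Hessian of $u$), you get $g_{ij}(0)=(1-\tilde h_{ii}u)^2\tilde\delta_{ij}+\pa_iw\,\pa_jw$, whose rank-one (Sherman--Morrison) inversion gives the $g^{ij}$ expansion with $|Q_{2ij}|\leq c|\tilde p|^{-2}u^2+c|\na_{\mC}u|^2$; and pairing $\pa^2_{ij}F(0)$, whose normal component is $\tilde h_{ij}-\tilde h_{ii}\tilde h_{jj}\tilde\delta^{ij}u+\pa^2_{ij}u$ and whose tangential component is $O(r^{-1}|\na_{\mC}u|)+O(r^{-2}|u|)$, against the unnormalized normal $\prod_k(1-\tilde h_{kk}u)\,\nu_{\mC}$ plus an $O(|\na_{\mC}u|)$ tangential remainder (with $c(x)=(\det g)^{-1/2}=O(1)$) reproduces exactly the stated formula and the bound $|Q_{1ij}|\leq c|\tilde p|^{-1}|\na_{\mC}u|^2+c|\tilde p|^{-2}|u|\,|\na_{\mC}u|$, using $|\tilde h|=O(r^{-1})$ and $|\na_{\mC}\tilde h|=O(r^{-2})$ for the cone.
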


\begin{rmk} \label{rmk:conicalexpanders}
	For the analog results of Lemma \ref{lem:graph} and Lemma \ref{lem:eqofsecondff} for asymptotically conical self-shrinkers, see \cite{LWang:14} by L. Wang and \cite{Chodosh-S:21,Chodosh-CMS:20} by Chodosh et al. Note that in \cite{Chodosh-S:21,Chodosh-CMS:20}, the authors only stated their improved conical estimates for self-shrinkers (see Lemma 2.7 in \cite{Chodosh-S:21} and Lemma A.2 in \cite{Chodosh-CMS:20}). However, as equation (2.1) in \cite{Chodosh-S:21} also holds for self-expanders, their arguments can imply the same improved conical estimates for self-expanders.
\end{rmk}

\medskip
% proof of main thm %%%%%%%%%%%%%%%%%%%%%%%%%%%%%%%%%%%%%%%%%%%%%%%%%%%%%%%%%%
\section{Proof of Theorem \ref{thm:translator}}

In this section, we focus on the translator case and prove Thereom~\ref{thm:translator} as stated in the introduction. 
Let $\Sigma$ be given as in Theorem~\ref{thm:translator}. Recall that, according to the ordering of the eigenvalues in Section 2, $\kappa_1$ is the least eigenvalue. By the 2-convexity assumption, we have  $\kappa_1 + \kappa_2 > 0$, and the function ${\kappa_1}/{H}$ is bounded on $\Sigma$ by
\[\frac{-1}{n-2}\le \frac {\kappa_1}{H} < 1.\]
Suppose $\Sigma^-$ is not empty, then we have
\[\inf_{\Sigma}\left( \frac{\kappa_1}{H} \right) = \epsilon_0 <0.\]
Following Spruck and Xiao~\cite{SX:20}, we define
\begin{equation*}
	\vphi \left( r\right) :=\left\{ 
	\begin{array}{ccc}
		-r^4e^{-1/r^2} & \text{if} & r<0 \\ 
		0 & \text{if} & r\geq 0.
	\end{array}%
	\right.
\end{equation*}
It is easy to check that we have $\dot{\vphi} >0$ and $\ddot{\vphi}<0$ for $r<0$. Moreover, $\dot{\vphi}$ and $\ddot{\vphi}$ are both bounded away from 0 on $ [-r_2,- r_1]$, for any $r_2>r_1>0$. Now let 
\[\vphi := \vphi\left({\kappa_1}/{H}\right),\] and then $\vphi$ is continuous on $\Sigma$ and smooth on $\Sigma^-$.

Define the linear elliptic operator $\textit{L}$ by
\[\textit{L}\varphi=
	\mathrm{\Delta}^{\Sigma} \vphi	+ \left\langle  \nabla \varphi ,\ee_{n+1}\right\rangle
+ 2\left\langle \nabla \varphi, \frac{\nabla H}{H}\right\rangle.\]

\begin{lem}\label{lem:laplacianPhi(k/H)}
	On each connected component of $\Sigma_A^-$, we have
	\[
	\textit{L}\vphi = \ddot{\vphi} \left|\nabla\left(\frac{\kappa_1}{H}\right) \right|^2
	- 2\dot{\vphi} \sum_{l \neq 1}{\frac{\Lab{\nabla h_{l1}}^2}{H(\kappa_l-\kappa_1)}} \leq 0.
	\]
\end{lem}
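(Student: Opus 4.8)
The plan is to compute $L\vphi$ directly using the chain rule and Lemma~\ref{lem:laplacian_k1}, working on a connected component $\Omega$ of $\Sigma_A^-$ where $\kappa_1$ has multiplicity one and is smooth. First I would write $w := \kappa_1/H$, so that $\vphi = \vphi(w)$, and note that on $\Omega$ we have $w < 0$, hence $\dot\vphi(w) > 0$ and $\ddot\vphi(w) < 0$. By the chain rule, $\nabla\vphi = \dot\vphi\,\nabla w$ and $\Delta^\Sigma\vphi = \dot\vphi\,\Delta^\Sigma w + \ddot\vphi\,|\nabla w|^2$. Substituting into the definition of $L$ and collecting terms gives
\[
L\vphi = \ddot\vphi\,|\nabla w|^2 + \dot\vphi\left(\Delta^\Sigma w + \langle \nabla w, \ee_{n+1}\rangle + 2\left\langle \nabla w, \frac{\nabla H}{H}\right\rangle\right) = \ddot\vphi\,|\nabla w|^2 + \dot\vphi\cdot Lw,
\]
so the problem reduces to showing that $Lw = -2\sum_{l\neq 1}\frac{|\nabla h_{l1}|^2}{H(\kappa_l - \kappa_1)}$ on $\Omega$.

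Next I would expand $Lw$ where $w = \kappa_1/H$. Using $\nabla w = \frac{\nabla\kappa_1}{H} - \frac{\kappa_1\nabla H}{H^2}$ and the quotient rule for the Laplacian, $\Delta^\Sigma w = \frac{\Delta^\Sigma\kappa_1}{H} - \frac{2\langle\nabla\kappa_1,\nabla H\rangle}{H^2} - \frac{\kappa_1\Delta^\Sigma H}{H^2} + \frac{2\kappa_1|\nabla H|^2}{H^3}$, and assembling the drift and extra gradient terms, the key cancellations should occur: the cross terms $\langle\nabla\kappa_1,\nabla H\rangle/H^2$ and the $|\nabla H|^2$ terms cancel against the $2\langle\nabla w,\nabla H/H\rangle$ contribution. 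What survives is a combination of $\Delta^f\kappa_1/H$ and $-(\kappa_1/H)(\Delta^f H/H)$, where $\Delta^f = \Delta^\Sigma + \langle\nabla\cdot,\ee_{n+1}\rangle$. Now I invoke Lemma~\ref{pre:translator}: part~(1) gives $\Delta^f H = -|A|^2 H$, so $-(\kappa_1/H)(\Delta^f H/H) = (\kappa_1/H)|A|^2$; and part~(2), evaluated on $(\tau_1,\tau_1)$ together with Lemma~\ref{lem:laplacian_k1}, gives $\Delta^f\kappa_1 = (\Delta^f A)(\tau_1,\tau_1) - 2\sum_{l\neq 1}\frac{|\nabla h_{l1}|^2}{\kappa_l-\kappa_1} = -|A|^2\kappa_1 - 2\sum_{l\neq 1}\frac{|\nabla h_{l1}|^2}{\kappa_l-\kappa_1}$. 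Therefore $\frac{\Delta^f\kappa_1}{H} = -\frac{|A|^2\kappa_1}{H} - \frac{2}{H}\sum_{l\neq 1}\frac{|\nabla h_{l1}|^2}{\kappa_l-\kappa_1}$, and adding the $(\kappa_1/H)|A|^2$ term cancels the $|A|^2$ contributions, leaving exactly $Lw = -2\sum_{l\neq 1}\frac{|\nabla h_{l1}|^2}{H(\kappa_l-\kappa_1)}$, as required.

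Finally, combining the two displays yields the claimed formula for $L\vphi$. For the inequality $L\vphi \le 0$: the first term $\ddot\vphi\,|\nabla w|^2 \le 0$ since $\ddot\vphi < 0$ on $\Omega$; and in the second term $\dot\vphi > 0$, $H > 0$, and $\kappa_l - \kappa_1 \ge 0$ for every $l \neq 1$ (with the denominators that vanish causing no issue because, by Lemma~\ref{lem:laplacian_k1} and the construction of the adapted moving frame from Lemma~\ref{lma:derzinski}, the sum only ranges over eigenvalues genuinely distinct from $\kappa_1$, or more carefully one groups repeated eigenvalues so the quotients appearing are well-defined and nonnegative), so $-2\dot\vphi\sum_{l\neq 1}\frac{|\nabla h_{l1}|^2}{H(\kappa_l-\kappa_1)} \le 0$ as well.

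The main obstacle I anticipate is not the computation itself, which is a fairly mechanical chain-rule-and-cancellation exercise, but rather the careful handling of the sum $\sum_{l\neq 1}$ in the presence of higher-multiplicity eigenvalues $\kappa_l$ with $l \neq 1$: one must justify via Lemma~\ref{lma:derzinski} that on $\Omega$ the adapted moving frame exists and that the expression $\sum_{l\neq 1}\frac{|\nabla h_{l1}|^2}{\kappa_l-\kappa_1}$ is well-defined (the denominators $\kappa_l - \kappa_1$ vanishing only when $\tau_l$ lies in the same eigendistribution as $\tau_1$, in which case $(\nabla A)(\tau_l,\tau_1)$ must correspondingly vanish by the umbilicity/orthogonality of eigendistributions — this is precisely the content that Lemma~\ref{lem:laplacian_k1}'s derivation encodes). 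Verifying that $\kappa_1$ genuinely has multiplicity one on a dense-enough subset (which is where 2-convexity $\kappa_1 + \kappa_2 > 0$ enters, forcing $\kappa_1 < \kappa_2$ wherever $\kappa_1 < 0$) is the conceptual crux ensuring Lemma~\ref{lem:laplacian_k1} is applicable to $i = 1$ throughout $\Omega$.
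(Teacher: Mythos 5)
Your proposal is correct and follows essentially the same route as the paper: chain rule on $\vphi(\kappa_1/H)$, the quotient-rule cancellation of the $\langle\nabla\kappa_1,\nabla H\rangle$ and $|\nabla H|^2$ terms against the $2\langle\nabla\cdot,\nabla H/H\rangle$ term, then Lemma~\ref{pre:translator} and Lemma~\ref{lem:laplacian_k1} to cancel the $|A|^2$ contributions, leaving exactly $-2\dot{\vphi}\sum_{l\neq1}\frac{|\nabla h_{l1}|^2}{H(\kappa_l-\kappa_1)}$; packaging the drift term into $\Delta^f$ acting on $\kappa_1/H$ is only a cosmetic reorganization of the paper's computation. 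Your closing remark is also the right resolution of the denominator issue: on $\Sigma^-$ the 2-convexity forces $\kappa_1<0<\kappa_2\leq\cdots\leq\kappa_n$, so $\kappa_1$ has multiplicity one and every $\kappa_l-\kappa_1$ is strictly positive, with no vanishing denominators to worry about.
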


\begin{proof}
	First of all, by direct computations, we have
	\begin{eqnarray} \label{eq1}
		&& \Delta^{\Sigma}\vphi\left( \frac{\kappa_1}{H} \right) \notag \\
		&=& \dot{\vphi}\Delta^{\Sigma}\lp \frac{\kappa_1}{H}\rp + \ddot{\vphi} \bigg|\na \lp\frac{\kappa_1}{H}\rp \bigg|^2  \\
		&=& \frac{\dot{\vphi}}{H^2} \lp H\Delta^{\Sigma}\kappa_1 - \kappa_1\Delta^{\Sigma}H \rp - 2\dot{\vphi} \lla  \na \lp \frac{\kappa_1}{H} \rp, \frac{\na H}{H} \rra + \ddot{\vphi} \bigg|\na \lp\frac{\kappa_1}{H}\rp \bigg|^2. \notag
	\end{eqnarray}
	On the other hand, by Lemma \ref{pre:translator} and Lemma \ref{lem:laplacian_k1}, 
	\begin{eqnarray} \label{eq2}
		%&& H\Delta^{\Sigma}\kappa_1 - \kappa_1\Delta^{\Sigma}H \\
               H\Delta^{\Sigma}\kappa_1 - \kappa_1\Delta^{\Sigma}H 
		&=& H\lp \Delta^{\Sigma}h_{11} - 2\sum_{l \neq 1} \frac{|\na h_{l1}|^2}{\kappa_l-\kappa_1} \rp - \kappa_1\Delta^{\Sigma}H \notag \\
%		&=& H\lp -\la \na h_{11}, \ee_{n+1} \ra -|A|^2h_{11} \rp - 2H\sum_{l \neq 1} \frac{|\na h_{l1}|^2}{\kappa_l-\kappa_1} \notag \\
%		&& + \kappa_1 \lp \la \na H, \ee_{n+1} \ra + |A|^2H \rp \notag \\
		&=& -\la H\na \kappa_1 - \kappa_1\na H, \ee_{n+1} \ra - 2H\sum_{l \neq 1} \frac{|\na h_{l1}|^2}{\kappa_l-\kappa_1}  \\
		&=& -H^2 \lla \na \lp \frac{\kappa_1}{H} \rp, \ee_{n+1} \rra - 2H\sum_{l \neq 1} \frac{|\na h_{l1}|^2}{\kappa_l-\kappa_1}. \notag
	\end{eqnarray}
	Combining (\ref{eq1}) and (\ref{eq2}), we get
	\begin{equation*}
		\Delta^{\Sigma} \vphi = \ddot{\vphi}\left|\na \left( \frac{\kappa_1}{H} \right) \right|^2 - 2\dot{\vphi} \lla \na \lp \frac{\kappa_1}{H} \rp, \frac{\na H}{H} \rra - \dot{\vphi} \lla \na \left( \frac{\kappa_1}{H} \right) , \ee_{n+1} \rra - 2\dot{\vphi} \sum_{l \neq 1}{\frac{\Lab{\nabla h_{l1}}^2}{H(\kappa_l-\kappa_1)}}.
	\end{equation*}

	Finally, the inequality in the lemma follows from the facts that $\dot{\vphi} >0$, $\ddot{\vphi}<0$ and $\kappa_l > \kappa_1$ for any $l\neq 1$.
\end{proof}

Now we are ready to prove Theorem ~\ref{thm:translator}. 

\medskip
\noindent {\it Proof of Theorem \ref{thm:translator}.}
We argue by contradiction. Suppose the theorem is false. Then $\Sigma^-$ is non-empty and we have
\[\xi :=\inf_{\Sigma}\varphi\left( \frac{\kappa_1}{H} \right) <0.\]

\subsection*{Case 1: interior infimum} If the infimum $\xi$ is attained at some point $p_0 \in \Sigma$, then $p_0$ must be in $\Sigma^-$. Since $\Sigma_A^-$ is open and dense in $\Sigma^-$, by Lemma \ref{lem:laplacianPhi(k/H)}, we have $L\vphi(p) \leq 0$ for any point $p \in \Sigma^-$. Hence, it follows from the strong maximum principle that  $\vphi = \vphi (\kappa_1/H)$ is a constant function on $\Sigma^-$. Then, on each connected component of $\Sigma_A^-$, we have
\begin{equation}\label{temp1}
	{\kappa_1}/{H} \equiv \varepsilon_0:=\varphi^{-1}(\xi) < 0,
\end{equation}
\begin{equation}\label{temp2}
	{\kappa_n}/{H}\ge \cdots \ge \kappa_2/H \ge -\kappa_1/H = -\varepsilon_0 > 0,
\end{equation}
\begin{equation}\label{temp3}
    \nabla \lp \frac{\kappa_1}{H} \rp = \frac{H\nabla \kappa_1-\kappa_1\nabla H}{H^2}= \frac{\nabla \kappa_1-\epsilon_0\nabla H}{H}=0.
\end{equation}
By Lemma \ref{lem:laplacianPhi(k/H)}, we get
\begin{equation}\label{temp4}
    \sum_{l \neq 1}{\frac{\Lab{\nabla h_{l1}}^2}{H(\kappa_l-\kappa_1)}}=0
\end{equation}
on each connected component of $\Sigma_A^-$ as well. 

%\smallskip
From (\ref{temp4}), we have $\na h_{l1} = 0$ for $l\neq 1$. In particular, it follows from the Codazzi equation that $\na_l \kappa_1 =\na_l h_{11} = 0$ and $ \na_1 \kappa_l =\na_1 h_{ll} =0$ for all $l\neq 1$. Combining this with (\ref{temp3}), we get 
\[\na_1 \kappa_1 = \frac{\epsilon_0}{1-\epsilon_0} \na_1(h_{22} + \cdots + h_{nn}) = 0.\] Thus $\na \kappa_1=0$, which implies $\na H=0$ by (\ref{temp3}), i.e., $H$ is constant (on each connected component of $\Sigma_A^-$).   

However,  by Lemma~\ref{pre:translator},   $\Delta^f H + |A|^2H=0 $ hence $H = 0$. This is a contradiction to the  2-convexity assumption. Therefore, the infimum $\xi$ cannot be attained on $\Sigma$.

\subsection*{Case 2: infinity infimum} 
Since $\xi :=\inf_{\Sigma}\varphi\left( \frac{\kappa_1}{H} \right) <0$, we may take $\alpha=0$ in Lemma 2.4. Then $\vphi$ is smooth on $\{\vphi < \alpha\} = \Sigma^-$, and it follows from the Omori-Yau maximum principle (Lemma \ref{lemmaomoriyau} and Remark~\ref{OY})  that there exists  a sequence $\{p_k\}$ of points in $\Sigma$ such that
$$  \vphi(p_k) \rightarrow \xi, \quad |\na \vphi|(p_k) \rightarrow 0, \quad \Delta^{\Sigma} \vphi(p_k) \rightarrow \delta \geq 0.  $$

%\smallskip
Since $\vphi(p_k) \rightarrow \xi<0 $, without loss of generality, we may assume all ${p_k}'s$ are in $\Sigma^-$. In view of the fact that $\Sigma_A^-$ is open and dense in $\Sigma^-$, for each $k$, we can pick some point $q_k \in \Sigma_A^-$ close to $p_k$ (after passing to the ``diagonal" subsequence) such that
\begin{equation} \label{omoriyaueq}
	\vphi(q_k) \rightarrow \xi, \quad |\na \vphi|(q_k)\equiv \dot{\vphi}|\na (\kappa_1/H)|(q_k) \rightarrow 0, \quad \Delta^{\Sigma} \vphi(q_k) \rightarrow \delta \geq 0.
\end{equation}
For the rest of the proof, the computations will be carried out around such $q_k \in \Sigma_A^-$, and all the limits are taken at  $q_k$ as $k\to \infty$.

%\smallskip
By (\ref{omoriyaueq}) and Lemma \ref{lem:laplacianPhi(k/H)}, at $q_k$, we have
\begin{equation} \label{equationofDatq}
	\begin{split}
		\Delta^{\Sigma} \vphi &= \ddot{\vphi}\left|\na \left( \frac{\kappa_1}{H} \right) \right|^2 - 2\dot{\vphi} \lla \na \lp \frac{\kappa_1}{H} \rp, \frac{\na H}{H} \rra - \dot{\vphi} \lla \na \left( \frac{\kappa_1}{H} \right) , \ee_{n+1} \rra \\
		&\ \ \ - 2\dot{\vphi} \sum_{l \neq 1}{\frac{\Lab{\nabla h_{l1}}^2}{H(\kappa_l-\kappa_1)}} \rightarrow \delta \geq 0, \ \text {as} \  k\to \infty.
	\end{split}
\end{equation}
Since $\dot{\vphi}$ is positive and bounded away from $0$ as $k\rightarrow \infty$, it follows from (\ref{omoriyaueq}) that $\na (\kappa_1/H)(q_k) \rightarrow 0$. We also note, by the translator equation (\ref{eq:translator}), that
\begin{equation} \label{equationofnaofH}
	\nabla_i H = -h_{ij}\langle \tau_j, \ee_{n+1} \rangle,
\end{equation}
which implies $|\na H|^2 \leq |A|^2 \leq nH^2$. Hence $|\na H|/H$ is bounded. Together with the fact that  $|\ddot{\vphi}|$ is also bounded away from 0 as $k\rightarrow \infty$, it follows that the first three terms on the RSH of  (\ref{equationofDatq}) all have limits $0$ as $k\rightarrow \infty$.

Therefore,  since  $\Delta^{\Sigma} \vphi(q_k) \rightarrow \delta \geq 0$ while $-2\dot{\vphi} \sum_{l\neq 1}\frac{|\na h_{l1}|^2}{H(\kappa_l - \kappa_1)}\le 0$,  we conclude that 
\begin{equation*}
	\sum_{l\neq 1}\frac{|\na h_{l1}|^2}{H(\kappa_l - \kappa_1)} = \sum_{l\neq 1}\frac{|\na h_{l1}|^2}{H^2}\frac{H}{(\kappa_l - \kappa_1)} \rightarrow 0.
\end{equation*}
 Note that $\frac{H}{\kappa_l - \kappa_1} \geq \frac{H}{H+H} = \frac{1}{2}$ for $l\neq 1$, thus
\begin{equation} \label{eqthm1}
	\frac{|\na h_{l1}|^2}{H^2} \rightarrow 0.
\end{equation}
Combining (\ref{eqthm1}) with the Codazzi equation, we get
\begin{equation} \label{eqna1&kappa1}
	\frac{\na_l h_{11}}{H}=\frac{\na_l \kappa_1}{H} \rightarrow 0, \quad \forall\ l\neq 1, 
\end{equation}
\begin{equation} \label{eqh_{ll}}
	\frac{\na_1 h_{ll}}{H}=\frac{\na_1 \kappa_l}{H} \rightarrow 0, \quad \forall\ l\neq 1.
\end{equation}

%\smallskip
In particular, since
\begin{equation} \label{eqnakappaH}
	\na\lp \frac{\kappa_1}{H}\rp = \frac{\na \kappa_1}{H} -\frac{\kappa_1\na H}{H^2} \rightarrow 0,
\end{equation}
 we have
\begin{equation} \label{eqnakappa1}
	\begin{split}
		\na_1\lp \frac{\kappa_1}{H}\rp &= \frac{\na_1 \kappa_1}{H} -\frac{\kappa_1\na_1H}{H^2}  \\
		&= \lp\frac{\na_1 \kappa_1}{H} -\frac{\kappa_1\na_1\kappa_1}{H^2} \rp - \frac{\kappa_1\na_1 (h_{22}+\cdots + h_{nn})}{H^2}\rightarrow 0.
	\end{split}
\end{equation}
By using (\ref{eqh_{ll}}) and the fact that $\kappa_1/H \rightarrow \epsilon_0 <0$, it follows that  the second term in the second identity of (\ref{eqnakappa1}) converges to $0$ as $k\rightarrow \infty$. So 
\begin{equation*}
	\frac{\na_1 \kappa_1}{H} -\frac{\kappa_1\na_1\kappa_1}{H^2} = \frac{\na_1 \kappa_1}{H}\lp 1-\frac{\kappa_1}{H}\rp \rightarrow 0,
\end{equation*}
hence 
\begin{equation} \label{eqna_1h_{11}}
	\frac{\na_1 \kappa_1}{H} \rightarrow 0.
\end{equation}
Combining (\ref{eqna1&kappa1}) and (\ref{eqna_1h_{11}}), we obtain
\begin{equation*}
	\frac{\na \kappa_1}{H} \rightarrow 0.
\end{equation*}
Thus, by (\ref{eqnakappaH}),
\begin{equation*}
	\frac{\na H}{H} \rightarrow 0.
\end{equation*}
Now, using (\ref{equationofnaofH}), for any $l$, we obtain
\begin{equation}\label{Ntoe_n+1}
	\frac{-\kappa_l\left\langle \tau_l , \ee_{n+1} \right\rangle}{H} =\frac{\nabla_l H}{H}\rightarrow 0.
\end{equation}
We also observe that
\begin{equation}\label{temp111}
	\lim_{k\to \infty}\frac{\kappa_n}{H}(q_k)\ge \cdots \ge \lim_{k\to \infty}\frac{\kappa_2}{H}(q_k) \ge -\lim_{k\to \infty}\frac{\kappa_1}{H}(q_k) = -\varepsilon_0 > 0.
\end{equation}
Therefore,  by (\ref{Ntoe_n+1}) and (\ref{temp111}), we conclude that the unit normal sequence $\{N(q_k)\}$  converges to $\ee_{n+1}$, and hence $H(q_k) \rightarrow 1$ as $k\to \infty$.

%\smallskip
Now, as in Spruck-Xiao~\cite{SX:20}, we let $\Gamma_k=\Sigma-q_k$ be the hypersurface obtained from $\Sigma$ by translating $q_k$ to the origin. Since $\Sigma$ has bounded second fundamental form, so does $\Gamma_k$. By the compactness theorem for translaor (see, e.g.,~\cite{HIMW:21}), we can choose a subsequence, which we still denote by $\left\{\Gamma_k\right\}_{k = 1}^{\infty}$, such that  $\left\{\Gamma_k\right\}$ converges smoothly to some limit hypersurface $\Gamma_{\infty}$. Then $\Gamma_{\infty}$ is a 2-convex translating soliton and satisfies $H(0)=1$. Moreover, the corresponding function $\vphi$ for $\Gamma_{\infty}$ attains its minimum on $\Gamma_{\infty}$ at the origin, which is impossible by the conclusion in {\bf Case 1}. 

%\smallskip
Therefore $\Sigma$ must be convex. This completes the proof of Theorem \ref{thm:translator}.
\hfill $\Box$

\bigskip
% proof of main thm %%%%%%%%%%%%%%%%%%%%%%%%%%%%%%%%%%%%%%%%%%%%%%%%%%%%%%%%%%
\section{Proof of Theorem \ref{thm:selfexpander}} % Theorem~\ref{thm:main}}

In this section, we consider the self-expander case and prove the convexity of 2-convex asymptotically conical self-expanders with mean convex asymptotic cones. Let $\Sigma$ be given as in Theorem~\ref{thm:selfexpander} and suppose $\Sigma^-$ is not empty. By the 2-convexity assumption, we have
\[\inf_{\Sigma}\left( \frac{\kappa_1}{H} \right) = \epsilon_0 <0.\]
Following Spruck and Xiao~\cite{SX:20}, we define $\vphi (r)$ as in section 3 and let 
\[\vphi := \vphi\left({\kappa_1}/{H}\right),\] and then $\vphi$ is continuous on $\Sigma$ and smooth on $\Sigma^-$.

%\smallskip
Define another linear elliptic operator $\bar{L}$ by
\[\bar{L}\varphi=
\mathrm{\Delta}^{\Sigma} \vphi	+ \left\langle  \nabla \varphi ,x\right\rangle
+ 2\left\langle \nabla \varphi, \frac{\nabla H}{H}\right\rangle.\]

\begin{lem}\label{lem:expanderlaplacianPhi(k/H)}
On each connected component of $\Sigma_A^-$, we have
\[
\bar{L}\vphi = \ddot{\vphi} \left|\nabla\left(\frac{\kappa_1}{H}\right) \right|^2
- 2\dot{\vphi} \sum_{l \neq 1}{\frac{|{\nabla h_{l1}}|^2}{H(\kappa_l-\kappa_1)}} \leq 0.
\]
\end{lem}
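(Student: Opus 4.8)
The plan is to mirror the proof of Lemma~\ref{lem:laplacianPhi(k/H)} almost verbatim, simply replacing the ambient drift term $\langle \nabla\cdot,\ee_{n+1}\rangle$ coming from the translator structure with the drift term $\langle\nabla\cdot,x\rangle$ coming from the self-expander structure, and using Lemma~\ref{pre:expander} in place of Lemma~\ref{pre:translator}. Since the statement concerns a local computation on each connected component of $\Sigma_A^-$, where $\kappa_1$ has multiplicity one and is therefore smooth, Lemma~\ref{lem:laplacian_k1} applies with no changes (it is stated purely in terms of the Codazzi tensor $A$ and its covariant Laplacian, with no reference to the soliton equation).

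First I would expand $\Delta^\Sigma\vphi(\kappa_1/H)$ via the chain rule exactly as in equation~(\ref{eq1}):
\[
\Delta^{\Sigma}\vphi = \frac{\dot{\vphi}}{H^2}\lp H\Delta^{\Sigma}\kappa_1 - \kappa_1\Delta^{\Sigma}H\rp - 2\dot{\vphi}\lla \na\lp\frac{\kappa_1}{H}\rp,\frac{\na H}{H}\rra + \ddot{\vphi}\Lab{\na\lp\frac{\kappa_1}{H}\rp}^2.
\]
Then I would compute $H\Delta^{\Sigma}\kappa_1 - \kappa_1\Delta^{\Sigma}H$ using Lemma~\ref{lem:laplacian_k1} and Lemma~\ref{pre:expander}. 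Writing $\mL = \Delta^{\Sigma} + \langle\na\cdot,x\rangle$, parts (1) and (2) of Lemma~\ref{pre:expander} give $\Delta^{\Sigma}H = -\langle\na H,x\rangle - H(1+|A|^2)$ and $(\Delta^{\Sigma}A)(\tau_1,\tau_1) = -\langle\na\kappa_1,x\rangle - \kappa_1(1+|A|^2)$. Substituting into the formula $\Delta^{\Sigma}\kappa_1 = (\Delta^{\Sigma}A)(\tau_1,\tau_1) - 2\sum_{l\neq 1}\frac{|\na h_{l1}|^2}{\kappa_l-\kappa_1}$, the $(1+|A|^2)$ terms cancel in the combination $H\Delta^{\Sigma}\kappa_1 - \kappa_1\Delta^{\Sigma}H$ just as the $|A|^2$ terms did in the translator case, leaving
\[
H\Delta^{\Sigma}\kappa_1 - \kappa_1\Delta^{\Sigma}H = -\langle H\na\kappa_1 - \kappa_1\na H,\,x\rangle - 2H\sum_{l\neq 1}\frac{|\na h_{l1}|^2}{\kappa_l-\kappa_1} = -H^2\lla\na\lp\frac{\kappa_1}{H}\rp,x\rra - 2H\sum_{l\neq 1}\frac{|\na h_{l1}|^2}{\kappa_l-\kappa_1}.
\]

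Plugging this back and collecting terms, the $\langle\na(\kappa_1/H),x\rangle$ piece is exactly what gets absorbed into $\bar{L}\vphi$ through its definition (note $\na\vphi = \dot\vphi\,\na(\kappa_1/H)$, so $\langle\na\vphi,x\rangle = \dot\vphi\langle\na(\kappa_1/H),x\rangle$), and the $\na H/H$ piece is absorbed into the last term of $\bar L$; what remains is precisely
\[
\bar{L}\vphi = \ddot{\vphi}\Lab{\na\lp\frac{\kappa_1}{H}\rp}^2 - 2\dot{\vphi}\sum_{l\neq 1}\frac{|\na h_{l1}|^2}{H(\kappa_l-\kappa_1)}.
\]
The inequality $\bar L\vphi\le 0$ then follows from $\dot\vphi>0$, $\ddot\vphi<0$, and $\kappa_l>\kappa_1$ for $l\neq 1$ (the latter being the defining property of the adapted frame on $\Sigma_A^-$ combined with $\kappa_1$ having multiplicity one). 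I do not anticipate a genuine obstacle here: this is a routine adaptation, and the only point requiring a moment's care is bookkeeping the three drift/gradient terms so that they land in the correct slots of the operator $\bar L$ — in particular checking that $H$ is positive on $\Sigma^-$ (it is, by 2-convexity $\kappa_1+\kappa_2>0$, which forces $H>0$ wherever $\kappa_1<0$) so that dividing by $H$ and by $\kappa_l-\kappa_1\le 2H$ is legitimate and the sign of the last term is as claimed.
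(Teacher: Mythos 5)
Your proposal is correct and follows essentially the same route as the paper: expand $\Delta^{\Sigma}\vphi$ by the chain rule, apply Lemma \ref{lem:laplacian_k1} together with parts (1) and (2) of Lemma \ref{pre:expander} so that the $(1+|A|^2)$ terms cancel in $H\Delta^{\Sigma}\kappa_1-\kappa_1\Delta^{\Sigma}H$, and absorb the drift and $\na H/H$ terms into $\bar L$. The only cosmetic remark is that the bound $\kappa_l-\kappa_1\le 2H$ is not needed here; only $\kappa_l-\kappa_1>0$ (and $H>0$) is used for the sign of the last term.
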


\begin{proof}
First of all, by direct computations, we have
\begin{eqnarray} \label{eq1:expander}
	&& \Delta^{\Sigma}\vphi\left( \frac{\kappa_1}{H} \right) \notag \\
	&=& \dot{\vphi}\Delta^{\Sigma}\lp \frac{\kappa_1}{H}\rp + \ddot{\vphi} \bigg|\na \lp\frac{\kappa_1}{H}\rp \bigg|^2 \\
	%&=& \dot{\vphi}\lp  \frac{1}{H}\Delta^{\Sigma}\kappa_1 - \frac{\kappa_1}{H^2} \Delta^{\Sigma}H -\frac{2}{H^2}\na \kappa_1\cdot \na H + \frac{2\kappa_1}{H^3}|\na H|^2  \rp + \ddot{\vphi} \bigg|\na \lp\frac{\kappa_1}{H}\rp \bigg|^2 \notag \\
	&=& \frac{\dot{\vphi}}{H^2} \lp H\Delta^{\Sigma}\kappa_1 - \kappa_1\Delta^{\Sigma}H \rp - 2\dot{\vphi} \lla \na \lp \frac{\kappa_1}{H} \rp, \frac{\na H}{H} \rra + \ddot{\vphi} \bigg|\na \lp\frac{\kappa_1}{H}\rp \bigg|^2. \notag
\end{eqnarray}
On the other hand, by Lemma \ref{pre:expander} and Lemma \ref{lem:laplacian_k1}, 
\begin{eqnarray} \label{eq2:expander}
	H\Delta^{\Sigma}\kappa_1 - \kappa_1\Delta^{\Sigma}H &=& H\lp \Delta^{\Sigma}h_{11} - 2\sum_{l \neq 1}{\frac{|{\nabla h_{l1}}|^2}{\kappa_l-\kappa_1}} \rp - \kappa_1\Delta^{\Sigma}H \notag \\
	&=& H\lp -\la \na h_{11}, x\ra -h_{11}(1+|A|^2) \rp - 2H\sum_{l \neq 1}{\frac{|{\nabla h_{l1}}|^2}{\kappa_l-\kappa_1}} \notag \\
	&&+ \kappa_1 \lp \la \na H, x\ra + H(1+|A|^2) \rp \\
	&=& -\la H\na \kappa_1 - \kappa_1\na H, x \ra - 2H\sum_{l \neq 1}{\frac{|{\nabla h_{l1}}|^2}{\kappa_l-\kappa_1}} \notag \\
	&=& -H^2 \lla \na \lp \frac{\kappa_1}{H} \rp, x \rra - 2H\sum_{l \neq 1}{\frac{|{\nabla h_{l1}}|^2}{\kappa_l-\kappa_1}}. \notag
\end{eqnarray}
Combining (\ref{eq1:expander}) and (\ref{eq2:expander}), we get
\begin{equation*}
	\Delta^{\Sigma} \vphi = \ddot{\vphi}\left|\na \left( \frac{\kappa_1}{H} \right) \right|^2 - 2\dot{\vphi} \lla \na \lp \frac{\kappa_1}{H} \rp, \frac{\na H}{H} \rra - \dot{\vphi} \lla \na \left( \frac{\kappa_1}{H} \right) , x \rra - 2\dot{\vphi} \sum_{l \neq 1}{\frac{|{\nabla h_{l1}}|^2}{H(\kappa_l-\kappa_1)}}.
\end{equation*}

Finally, the inequality in the lemma follows from the facts that $\dot{\vphi} >0$, $\ddot{\vphi}<0$ and $\kappa_l > \kappa_1$ for any $l\neq 1$.
\end{proof}

\begin{rmk} \label{rmk:selfshrinker}
By the same computations, the above lemma also holds similarly for self-shrinkers.
\end{rmk}

Next, using Lemma \ref{lem:graph} and Lemma \ref{lem:eqofsecondff}, we investigate the decay rate of the principal curvatures of asymptotically conical self-expanders.

\begin{lem} \label{lem:decayofprin_cur}
	Let $\Sigma^n \subset \R^{n+1}$ be an asymptotically conical self-expander with the asymptotic cone $\mC$. For $1\leq i \leq n$, denote $\kappa_i$ as the principal curvatures of $\Sigma$ and $\tilde{\kappa}_i$ as the corresponding principal curvatures of $\mC$. Then, as $r\rightarrow \infty$, we have
	$$ \kappa_i = O(r^{-3}),\ \text{if}\ \tilde{\kappa}_i = 0; \quad \kappa_i = O(r^{-1}),\ \text{if}\ \tilde{\kappa}_i \neq 0; \quad H = O(r^{-1}). $$
\end{lem}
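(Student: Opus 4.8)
The plan is to parametrize the end of $\Sigma$ as a normal graph over the asymptotic cone $\mC$, substitute Ding's expansions for the metric and the second fundamental form (Lemma~\ref{lem:eqofsecondff}) together with the sharpened graph estimates $|\na^k_{\mC}u|=O(r^{-k-1})$ (the Remark after Lemma~\ref{lem:graph}), and then read off the decay of the $\kappa_i$ by a short eigenvalue--perturbation argument. All computations are carried out at the point $x=0$ of the adapted parametrization $\tilde F$ based at a point $\tilde p\in\mC\setminus B_R(0)$, with $r=|\tilde p|$ large.

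First I would record the geometry of $\mC$: since $g_{\mC}=dr\otimes dr+r^2g_\Gamma$, the radial direction is a principal direction with principal curvature $0$, while on the cross-section $\{r\}\times\Gamma$ the shape operator of $\mC$ is $r^{-1}$ times that of $\Gamma\subset\rS^n$, so every $\tilde\kappa_i$ is either identically $0$ or of size $O(r^{-1})$, and in the orthonormal frame $\{\pa_i\tilde F(0)\}$ — where $\tilde g_{ij}(0)=\tilde\delta_{ij}$ and $\tilde h_{ij}(0)=\tilde\kappa_i\tilde\delta_{ij}$ by the choice of $\tilde F$ — the shape operator of $\mC$ is $\diag(\tilde\kappa_1,\dots,\tilde\kappa_n)$. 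I would also use that $\pa^2_{ij}\tilde F(0)=\tilde h_{ij}\nu_{\mC}$ forces the Christoffel symbols of $\mC$ to vanish at $x=0$, so that $\pa^2_{ij}u(0)=(\na^2_{\mC}u)_{ij}(0)=O(r^{-3})$. Substituting $u=O(r^{-1})$, $\pa^2_{ij}u=O(r^{-3})$, $\tilde h_{kk}=O(r^{-1})$, $Q_{1ij}=O(r^{-5})$ and $Q_{2ij}=O(r^{-4})$ into Lemma~\ref{lem:eqofsecondff}, and using $\prod_k(1-\tilde h_{kk}u)=1+O(r^{-2})$, then yields, at $x=0$,
\[ h_{ii}=c(x)\bigl(\tilde\kappa_i+O(r^{-3})\bigr),\qquad h_{ij}=c(x)\,O(r^{-3})\ \text{ for }i\ne j,\qquad g^{ij}=\tilde\delta^{ij}+O(r^{-2}), \]
with $c(x)=O(1)$ (and $c(x)>0$); in particular $|h_{ij}|=O(r^{-1})$ for all $i,j$ and $g^{-1}=I+O(r^{-2})$. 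Since the $\kappa_i$ are the eigenvalues of $g^{-1}h$, we get $|\kappa_i|\le\|g^{-1}\|_{\mathrm{op}}\|h\|_{\mathrm{op}}=O(r^{-1})$ for every $i$, which already gives $\kappa_i=O(r^{-1})$ (in particular when $\tilde\kappa_i\ne0$) and $H=\sum_i\kappa_i=O(r^{-1})$.

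Finally, to sharpen the estimate when $\tilde\kappa_i=0$ I would pass to the symmetric matrix $\widehat S:=g^{-1/2}hg^{-1/2}$, which has the same eigenvalues $\kappa_i$ as $g^{-1}h$. By the display above, $h=\diag\bigl(c(x)\tilde\kappa_1,\dots,c(x)\tilde\kappa_n\bigr)+E$ with $\|E\|_{\mathrm{op}}=O(r^{-3})$, and $g^{\pm1/2}=I+O(r^{-2})$, so conjugation costs only a further $O(r^{-2})\cdot O(r^{-1})=O(r^{-3})$ and $\widehat S=\diag\bigl(c(x)\tilde\kappa_1,\dots,c(x)\tilde\kappa_n\bigr)+E'$ with $\|E'\|_{\mathrm{op}}=O(r^{-3})$. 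Weyl's eigenvalue inequality, applied to the ordered spectra (using $c(x)>0$ so the ordering is preserved), gives $|\kappa_i-c(x)\tilde\kappa_i|=O(r^{-3})$ for each $i$, this ordered pairing being exactly the ``corresponding'' one in the statement. Hence $\tilde\kappa_i=0$ forces $\kappa_i=O(r^{-3})$, and the proof is complete.

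I expect the main obstacle to be the error bookkeeping: to reach the exponent $-3$ rather than the $-2$ that Lemma~\ref{lem:graph} alone provides, one genuinely needs the second-derivative estimate $|\na^2_{\mC}u|=O(r^{-3})$ supplied by the Remark after Lemma~\ref{lem:graph} (proved there by induction), and one must check that expanding $\prod_k(1-\tilde h_{kk}u)$ and conjugating by $g^{\pm1/2}$ do not degrade this rate. A secondary, routine point is that the shape operator of $\Sigma$ is $g$-self-adjoint rather than a symmetric matrix — hence the symmetrization $g^{-1/2}(\cdot)\,g^{-1/2}$ before invoking Weyl — and that a vanishing principal curvature of $\mC$ may occur with multiplicity; neither of these affects the argument.
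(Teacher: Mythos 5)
Your proposal is correct and follows essentially the same route as the paper: parametrize the end as a graph over $\mC$ via Lemma \ref{lem:graph}, feed the improved estimate $|\na^2_{\mC}u|=O(r^{-3})$ and the expansions of Lemma \ref{lem:eqofsecondff} into $h_{ij}$ and $g^{ij}$, and read off the decay of $h^i_j$ and hence of the $\kappa_i$ and $H$. The only real difference is presentational: you work in the orthonormal frame at the base point (where $\tilde h_{ii}=\tilde\kappa_i=O(r^{-1})$) rather than in conical coordinates with $\tilde g_{ij}=O(r^2)$, $\tilde h_{ij}=O(r)$, and you make explicit, via symmetrization and Weyl's inequality, the eigenvalue-perturbation step that the paper leaves implicit when it passes from the component estimates (4.3)--(4.4) to the principal curvatures; this is a welcome sharpening, not a divergence in method.
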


\begin{proof}
	In this proof, without loss of generality, we may assume $i=1$ is the direction of radial $r$. First of all, we recall that for a cone $\mC$, if $j\geq 2$, we have
	$$  \tilde{g}_{11}=1,\  \tilde{g}_{1j}=0, \quad \text{and} \ \tilde{g}^{11}=1,\ \tilde{g}^{1j}=0.$$
	Moreover, if $i,j\geq 2$, we have
	$$  \tilde{g}_{ij}=O(r^2),\quad \text{and} \ \tilde{g}^{ij}=O(r^{-2}),\ \text{as}\ r\rightarrow \infty. $$
	For the second fundamental form and the mean curvature of $\mC$, as $r\rightarrow \infty$, we have
	$$  \tilde{h}_{1j}=0  \ \text{for}\ j \geq 1, \ \tilde{h}_{ij}=O(r) \ \text{for}\ i,j\geq 2,\quad \text{and} \ \tilde{H}=O(r^{-1}). $$
	
	Thus at the point $p \in \Sigma$, by Lemma \ref{lem:graph} and Lemma \ref{lem:eqofsecondff}, we get
	\begin{eqnarray*}
		h_{1j} &=& O(1) \cdot \left \{  (\tilde{h}_{1j} -\tilde{h}_{11}\tilde{h}_{jj}\tilde{\delta}^{1j}u + \pa_{1j}^2u ) \prod_{k}(1-\tilde{h}_{kk}u) + Q_{1ij}(x,u,\na_{\mC}u) \right \} \\
		&=& O(r^{-3}) \cdot O(1) + O(r^{-5}) \\
		&=& O(r^{-3}),
	\end{eqnarray*}
	and, for $i, j\geq 2$,
	\begin{eqnarray*}
		h_{ij} &=& O(1) \cdot \left \{ (\tilde{h}_{ij} -\tilde{h}_{ii}\tilde{h}_{jj}\tilde{\delta}^{ij}u + \pa_{ij}^2u ) \prod_{k}(1-\tilde{h}_{kk}u) + Q_{1ij}(x,u,\na_{\mC}u) \right \} \\
		&=& (\tilde{h}_{ij} + O(r^{-1}) +O(r^{-3}) )\cdot O(1) + O(r^{-5}) \\
		&=& \tilde{h}_{ij} \cdot O(1)+ O(r^{-1}).
	\end{eqnarray*}
	On the other hand, we have
	\begin{eqnarray*}
		g^{11} &=& \tilde{\delta}^{11}(1+2\tilde{h}_{11}u) + Q_{2ij}(x,u,\na_{\mC}u) \\
		&=& 1 + O(r^{-4}) \\
		&=& O(1),
	\end{eqnarray*}
	and, for $j \geq 2$,
	\begin{equation*}
		g^{1j} = \tilde{\delta}^{1j}(1+2\tilde{h}_{11}u) + Q_{2ij}(x,u,\na_{\mC}u) = O(r^{-4}).
	\end{equation*}
	Furthermore, for $i,j \geq 2$, we obtain
	\begin{eqnarray*}
		g^{ij} &=& \tilde{\delta}^{ij}(1+2\tilde{h}_{ii}u) + Q_{2ij}(x,u,\na_{\mC}u) \\
		&=& O(r^{-2}) \cdot O(1) + O(r^{-4}) \\
		&=& O(r^{-2}).
	\end{eqnarray*}
	
	Therefore, at the point $p \in \Sigma$, we have
	\begin{equation} \label{eq:decayofh1j}
		h^{1}_{j} = \sum_{k}g^{1k}h_{kj} = g^{11}h_{1j} + \sum_{k\geq 2}g^{1k}h_{kj} = O(r^{-3}), %\ \text{as}\ r\rightarrow \infty.
		%= O(r^{-3}) + \sum_{k\geq 2}O(r^{-4})\cdot O(r) = O(r^{-3}).
	\end{equation}	
	and, for $i,j \geq 2$,
	\begin{equation} \label{eq:decayofhij}
		h^{i}_{j} = \sum_{k} g^{ik}h_{kj} = \sum_{k}O(r^{-2})\cdot (\tilde{h}_{kj} + O(r^{-1})) = O(r^{-2})\cdot \tilde{h}_{jj} + O(r^{-3}).
	\end{equation}	
	Finally, Lemma \ref{lem:decayofprin_cur} follows from (\ref{eq:decayofh1j}) and (\ref{eq:decayofhij}).
\end{proof}

Now we are ready to prove Theorem ~\ref{thm:selfexpander}. 

\medskip
\noindent {\it Proof of Theorem \ref{thm:selfexpander}.}
Again, we argue by contradiction and assume the theorem is false. Then $\Sigma^-$ is non-empty, and we have
\[\xi :=\inf_{\Sigma}\varphi\left( \frac{\kappa_1}{H} \right) <0.\]

\subsection*{Case 1: interior infimum} If the infimum $\xi$ is attained at some point $p_0 \in \Sigma$, then $p_0$ must be in $\Sigma^-$. By Lemma \ref{lem:expanderlaplacianPhi(k/H)} and the same argument in the translator case in section 3, we conclude that the mean curvature $H$ is constant on each connected component of $\Sigma_A^-$. However,  by Lemma~\ref{pre:expander}, $\Delta^{\Sigma}H+\la \na H,x \ra = -H(1+|A|^2) $ hence $H = 0$, which is a contradiction to the  2-convexity assumption. Therefore, the infimum $\xi$ cannot be attained on $\Sigma$.

\subsection*{Case 2: infinity infimum}
Now we may assume $\vphi$ achieves its infimum at infinity. Then there exists a sequence $\{p_k\}$ of points in $\Sigma$ such that as $k\rightarrow \infty$ we have $p_k \rightarrow \infty$ and
\begin{equation} \label{eq:lamda1/H}
	\frac{\kappa_1}{H}(p_k) \rightarrow \epsilon_0 :=\varphi^{-1}(\xi) <0.
\end{equation}

We note that if $\Sigma$ is 2-convex and asymptotic to a cone $\mC$, then the sum of the smallest two principal curvatures of the asymptotic cone $\mC$ is nonnegative. On the other hand, for any cone, there is a principal curvature which is $0$. Thus the cone $\mC$ must be a convex cone, and $0$ is the smallest principal curvature of the asymptotic cone $\mC$. Then by Lemma \ref{lem:decayofprin_cur}, we have $\kappa_1=O(r^{-3})$ and $H=O(r^{-1})$, as $r\rightarrow \infty$. Moreover, as the asymptotic cone $\mC$ is mean convex so that at least one principal curvature is positive, by (\ref{eq:decayofhij}) in Lemma \ref{lem:decayofprin_cur}, the mean curvature of $\Sigma$ is exactly linear decay, i.e., $H\approx r^{-1}$, as $r\rightarrow \infty$. Thus $\kappa_1/H(p_k) \rightarrow 0$ as $p_k \rightarrow \infty$, which contradicts (\ref{eq:lamda1/H}).

%\smallskip
It follows from {\bf Case 1} and {\bf Case 2} that $\Sigma^-$ is an empty set. Therefore $\Sigma$ must be convex. This completes the proof of Theorem \ref{thm:selfexpander}.
\hfill $\Box$

\medskip
By Remark \ref{rmk:conicalexpanders}, Remark \ref{rmk:selfshrinker}, and the same argument in the proof of Theorem \ref{thm:selfexpander}, we also obtain the following result for 2-convex asymptotically conical self-shrinkers.

\begin{thm} \label{thm:selfshrinker}
	Let $\Sigma^n \subset \R^{n+1}$, $n\geq 3$, be a complete immersed two-sided 2-convex self-shrinker asymptotic to mean convex cones. Then $\Sigma^n$ is convex.
\end{thm}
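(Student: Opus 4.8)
The plan is to transcribe the proof of Theorem~\ref{thm:selfexpander} almost verbatim, replacing the drift term $\langle\nabla\cdot,x\rangle$ by the self-shrinker drift, the Simons-type identities of Lemma~\ref{pre:expander} by their self-shrinker analogues, and the conical estimates of Lemmas~\ref{lem:graph}, \ref{lem:eqofsecondff} and~\ref{lem:decayofprin_cur} by the self-shrinker versions, which are available by Remark~\ref{rmk:conicalexpanders} (due to L.~Wang and Chodosh et al.). Assume $\Sigma$ is not convex, so that $\Sigma^-\ne\emptyset$; since $\Sigma$ is 2-convex we have $H>0$ everywhere, $-\tfrac1{n-2}\le\kappa_1/H<1$, and $\kappa_1/H$ is smooth on $\Sigma^-$. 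With $\varphi$ as in Section~3, set $\xi:=\inf_\Sigma\varphi(\kappa_1/H)<0$. By Remark~\ref{rmk:selfshrinker}, the self-shrinker analogue of Lemma~\ref{lem:expanderlaplacianPhi(k/H)} holds: on each connected component of $\Sigma_A^-$, the operator $\bar{L}_s\varphi:=\Delta^{\Sigma}\varphi-\langle\nabla\varphi,x\rangle+2\langle\nabla\varphi,\nabla H/H\rangle$ (with the drift normalized appropriately for the shrinker) satisfies $\bar{L}_s\varphi=\ddot\varphi\,|\nabla(\kappa_1/H)|^2-2\dot\varphi\sum_{l\ne1}\frac{|\nabla h_{l1}|^2}{H(\kappa_l-\kappa_1)}\le0$; this rests only on the fact that the zeroth-order reaction terms of the Simons equations for $H$ and for $A$ share the same coefficient, hence cancel in $H\Delta^{\Sigma}\kappa_1-\kappa_1\Delta^{\Sigma}H$.

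Then I would argue in two cases. In \textbf{Case~1} ($\xi$ is attained at some $p_0\in\Sigma^-$), density of $\Sigma_A^-$ in $\Sigma^-$ and the strong maximum principle for $\bar{L}_s$ force $\varphi(\kappa_1/H)$ to be locally constant on $\Sigma^-$; hence on a connected component $\Omega$ of $\Sigma_A^-$ one has $\kappa_1/H\equiv\varepsilon_0<0$, $\nabla(\kappa_1/H)=0$, and $\sum_{l\ne1}\frac{|\nabla h_{l1}|^2}{H(\kappa_l-\kappa_1)}=0$, so $\nabla h_{l1}=0$ for all $l\ne1$. As in the translator case, Codazzi and $\kappa_1=\varepsilon_0 H$ then give $\nabla\kappa_1=0$ and $\nabla H=0$, i.e.\ $H$ is a positive constant on $\Omega$. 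Here is the single point where the shrinker differs from the translator and the expander: the Simons equation for $H$ is not $\Delta^{\Sigma}H+|A|^2H=0$, so $H$ constant does \emph{not} force $H=0$ (round spheres are mean-convex shrinkers with $H$ constant). Instead it forces $|A|^2$ to equal the constant at which the zeroth-order coefficient of that equation vanishes, and then the Simons equation for $|A|^2$ forces $|\nabla A|\equiv0$ on $\Omega$. A connected hypersurface of $\R^{n+1}$ with parallel second fundamental form is an open piece of $\R^{n-k}\times\rS^k(r)$ for some $0\le k\le n$, all of whose principal curvatures are nonnegative --- contradicting $\Omega\subset\Sigma^-$. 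Thus $\xi$ is not attained in the interior.

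In \textbf{Case~2} ($\xi$ is attained ``at infinity''), there is a sequence $p_k\to\infty$ in $\Sigma$ with $\kappa_1/H(p_k)\to\varepsilon_0<0$. Here I would use that $\Sigma$ is asymptotically conical: 2-convexity makes the sum of the two smallest principal curvatures of the asymptotic cone $\mC$ nonnegative, and every cone has a vanishing (radial) principal curvature, so $\tilde\kappa_1=0$ and $\mC$ is a weakly convex cone; mean-convexity of $\mC$ provides some $\tilde\kappa_i>0$. The self-shrinker analogue of Lemma~\ref{lem:decayofprin_cur}, whose proof carries over word for word from the shrinker conical estimates of Remark~\ref{rmk:conicalexpanders}, then gives $\kappa_1=O(r^{-3})$ while $H$ decays exactly linearly, $H\approx r^{-1}$, as $r\to\infty$; therefore $\kappa_1/H(p_k)\to0$, contradicting $\kappa_1/H(p_k)\to\varepsilon_0<0$. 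Combining the two cases, $\Sigma^-=\emptyset$, so $\Sigma$ is convex.

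I expect the main obstacle to be exactly the extra step in Case~1. For translators and expanders the Simons identity for $H$ kills $H$ outright once $H$ is known to be constant; for shrinkers one must instead pass through $|A|^2\equiv\mathrm{const}$, then $\nabla A\equiv0$, and finally invoke the classification of hypersurfaces with parallel second fundamental form. This classification is purely local, so it applies to the merely immersed $\Sigma$; in particular one should not try to shortcut Case~1 via Colding--Minicozzi-type rigidity, which would require embeddedness. The remaining ingredients --- the differential inequality in the shrinker normalization, the linear decay of $H$ for asymptotically conical shrinkers, and the two-case maximum-principle scheme --- are routine adaptations of Sections~3 and~4.
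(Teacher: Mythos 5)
Your proposal is correct and follows the same route as the paper, whose own ``proof'' of this theorem is literally the single sentence that Remark \ref{rmk:conicalexpanders}, Remark \ref{rmk:selfshrinker}, and the argument of Theorem \ref{thm:selfexpander} carry over. The one place where you genuinely add something is exactly the place where a verbatim transcription would break down, and your diagnosis of it is right: in Case 1, the expander proof concludes from $H\equiv\mathrm{const}$ and $\Delta^{\Sigma}H+\langle\nabla H,x\rangle=-H(1+|A|^2)$ that $H=0$, whereas for a shrinker the zeroth-order coefficient is $|A|^2-c$ (with $c>0$ depending on the normalization), so constancy of $H>0$ only yields $|A|^2\equiv c$; your continuation --- feeding this into the Simons identity for $|A|^2$ to get $\nabla A\equiv 0$ on the component of $\Sigma_A^-$, then invoking the (purely local, hence valid for immersions) classification of hypersurfaces with parallel second fundamental form as open pieces of $\mathbb{S}^k(r)\times\R^{n-k}$, all of whose principal curvatures are nonnegative once the normal is chosen so that $H>0$, contradicting $\kappa_1<0$ --- is correct and closes this step, which the paper leaves implicit. (A small alternative at the very end: from $\nabla A\equiv 0$ the eigendistributions are parallel, so mixed sectional curvatures vanish while the Gauss equation gives $\kappa_1\kappa_2<0$ for a 2-convex point of $\Sigma^-$, the same contradiction without quoting the classification.) Your Case 2 is the paper's argument unchanged, using the shrinker versions of the conical estimates from Remark \ref{rmk:conicalexpanders} to get $\kappa_1=O(r^{-3})$ and $H\approx r^{-1}$, and Remark \ref{rmk:selfshrinker} supplies the differential inequality, so the rest is a routine transfer as you say.
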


\bigskip
% references %%%%%%%%%%%%%%%%%%%%%%%%%%%%%%%%%%%%%%%%%%%%%%%%%%%%%%%%%%%%%

\end{document}